\titleformat{\section}{\normalfont\scshape\centering}{\thesection}{1em}{}
\titleformat{\subsection}{\bfseries}{\thesubsection}{1em}{}
\newtheorem{theorem}{Theorem}[section]
\newtheorem{corollary}[theorem]{Corollary}
\newtheorem{lemma}[theorem]{Lemma}
\newtheorem{proposition}[theorem]{Proposition}
\theoremstyle{definition}
\numberwithin{equation}{section}
\newcommand{\ord}{\textup{ord}}
\newcommand{\Gal}{\textup{Gal}}
\newcommand{\id}{\textup{id}}
\newcommand{\li}{\textup{li}}
\title{Equality of orders of a set of integers modulo a prime}
\date{}
\author{Olli J\"arviniemi}
\address{Department of Mathematics and Statistics, P.O. Box 68, 00014 Helsinki, Finland}
\email{olli.jarviniemi@helsinki.fi}
\begin{document}
\begin{abstract}
For finitely generated subgroups $W_1, \ldots , W_t$ of $\mathbb{Q}^{\times}$, integers $k_1, \ldots , k_t$, a Galois extension $F$ of $\mathbb{Q}$ and a union of conjugacy classes $C \subset \Gal(F/\mathbb{Q})$, we develop methods for determining if there exists infinitely many primes $p$ such that the index of the reduction of $W_i$ modulo $p$ divides $k_i$ and such that the Artin symbol of $p$ on $F$ is contained in $C$. The results are a multivariable generalization of H.W. Lenstra's work. As an application, we determine all integers $a_1, \ldots , a_n$ such that $\ord_p(a_1) = \ldots = \ord_p(a_n)$ for infinitely many primes $p$. We also discuss the set of those $p$ for which $\ord_p(a_1) > \ldots > \ord_p(a_n)$. The obtained results are conditional to a generalization of the Riemann hypothesis.
\end{abstract}

\maketitle

\section{Introduction}

H.W. Lenstra \cite{Lenstra} has considered the following problem:

Let $K$ be a global field, $W$ an infinite, finitely generated subgroup of $K^{\times}$, $F$ a finite Galois extension of $K$, $C$ a union of conjugacy classes of $\Gal(F/K)$, and $k$ a positive integer. Are there infinitely many primes $p$ such that the index of the reduction of $W$ modulo $p$ divides $k$, and the Artin symbol of $p$ on $F$ is contained in $C$?

Denote this set of primes $p$ by $M = M(K, F, C, W, k)$. Assuming a suitable generalization of the Riemann hypothesis (GRH),\footnote{In the context of the Artin conjecture, by GRH it is often meant that the zeros of the Dedekind zeta-functions of number fields having real part between $0$ and $1$ have real part $\frac{1}{2}$. This generalization of the Riemann hypothesis is sometimes referred to as the extended Riemann hypothesis (ERH). For $K$ a function field the required Riemann hypothesis has been proven, but for number fields it has been not.} Lenstra determines a necessary and sufficient condition for $M$ to be infinite. One way to formulate this condition is that $M$ is finite if and only if there is an obstruction at a finite level. This is discussed in more detail later.

The results are general, and can be applied, for example, to determine when there are infinitely many $p$ such that a given integer $a$ is a primitive root modulo $p$ (under GRH). The famous Artin's primitive conjecture is that this set is infinite for all $a$ not equal to $-1$ or a square. Furthermore, it is conjectured that this set has a positive natural density. Hooley \cite{Hooley} was the first to prove that the conjecture holds under GRH.  While the Artin's primitive conjecture is still a conjecture, Heath-Brown \cite{Heath} has unconditionally proven results such as that there are at most two primes $a$ for which there are only finitely many desired primes $p$. For a comprehensive survey on the conjecture and related problems, see \cite{Moree-Survey}.

There is still some room for generalizations of Lenstra's work. For example, a natural question on orders of integers modulo primes is ``are there infinitely many primes $p$ such that $\ord_p(2) = \ord_p(3)$?'', and this does not directly follow from Lenstra's results. 

The Schinzel-Wójcik problem asks to determine all integers $a_1, \ldots , a_k$ such that for infinitely many primes $p$ we have $\ord_p(a_1) = \ord_p(a_2) = \ldots = \ord_p(a_k)$. Schinzel and Wójcik \cite{Schinzel-Wojcik} solved the problem for $n = 2$. In this case, there exists infinitely many such $p$ as long as $|a_1|, |a_2| > 1$. Their argument is elementary, though highly nontrivial.

Pappalardi and Susa \cite{Pappalardi-Susa} have proven that under GRH the density of these $p$ exists for any $a_i$. However, their results do not provide a condition for the infinitude of these primes. They also prove (proposition 14) that if the numbers $a_i$ satisfy certain properties, there exists only finitely many such $p$. We prove that, assuming GRH, this is the only obstruction, and in other cases there are infinitely many desired $p$. If there are infinitely many such $p$, then they also have positive density.

While proving this type of results seems to be difficult unconditionally, GRH is not the only hypothesis whose assumption leads to progress. Wójcik \cite{Wojcik} has proved that under the Schinzel Hypothesis H there are infinitely many desired $p$ in the case $n = 3$ assuming that $|a_i| > 1$ and that the subgroup of $\mathbb{Q}^{\times}$ generated by $a_1, a_2$ and $a_3$ does not contain $-1$. Recently Anwar and Pappalardi \cite{Anwar-Pappalardi} have determined a necessary and sufficient condition for the infinitude of $p$ such that all of $a_i$ are primitive roots modulo $p$ under the Schinzel Hypothesis H. Matthews \cite{Matthews} has previously proven stronger density results for such $p$ under GRH. See \cite{Fouad} for further discussion of the problem under Schinzel's hypothesis.

Our strategy to solve the Schinzel-Wójcik problem under GRH is to develop a generalization of Lenstra's results for several groups $W_1, \ldots , W_t$ in the place of a single group $W$. We first present a brief overview of Lenstra's work and some results on Kummer-type extensions, after which we provide our generalization. We then apply our machinery to prove the following results.

\begin{theorem}
\label{thm:equal}
Assume GRH. Let $a_1, \ldots , a_k$ be rationals not equal to $-1, 0, 1$. There are infinitely many primes $p$ such that $\ord_p(a_1) = \ldots = \ord_p(a_k)$ if and only if at least one of the following statements is false:
\begin{enumerate}
\item There exists integers $e_i$ such that
$$\prod a_i^{e_i} = -1.$$
\item There exists integers $f_i$ with an odd sum such that
$$\prod a_i^{f_i} = 1.$$
\end{enumerate}
Furthermore, the density of such primes exists, and if there are infinitely many such primes, their density is positive.
\end{theorem}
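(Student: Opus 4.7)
The plan is to apply the multivariable generalization of Lenstra's theorem developed in the earlier sections, taking $W_i = \langle a_i \rangle \subset \mathbb{Q}^\times$. The starting observation is that $\ord_p(a_1) = \ldots = \ord_p(a_k)$ holds if and only if, for every prime power $\ell^m$, the $a_i$ are either all simultaneously $\ell^m$-th powers modulo $p$ or none of them are. Each level-$\ell^m$ condition is a Chebotarev-type condition on the Kummer extension $\mathbb{Q}(\zeta_{\ell^m}, a_1^{1/\ell^m}, \ldots, a_k^{1/\ell^m})$, and via M\"obius inversion it can be rewritten in terms of the ``index divisible by $\ell^m$'' conditions that the multivariable machinery treats directly. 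Combining these, under GRH, yields the density as an absolutely convergent sum of Chebotarev densities and, in particular, gives existence.

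For the ``only if'' direction, I argue directly that (1) and (2) together obstruct all but finitely many primes: if $\ord_p(a_i) = d$ for all $i$, then reducing $\prod a_i^{e_i} = -1$ modulo $p$ places $-1$ in the cyclic group $\langle a_1\rangle \bmod p$ of order $d$, forcing $d$ to be even. Writing $a_i = g^{c_i}$ with $\gcd(c_i, d) = 1$ (so that each $c_i$ is odd), the relation $\prod a_i^{f_i} = 1$ taken modulo $p$ gives $d \mid \sum f_i c_i$, so $\sum f_i \equiv \sum f_i c_i \equiv 0 \pmod 2$, contradicting (2). This recovers the obstruction already noted by Pappalardi and Susa.

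For the ``if'' direction, the positivity criterion supplied by the multivariable Lenstra result says the density is positive precisely when no Frobenius obstruction appears at any finite level. For odd $\ell$ the Kummer groups behave freely enough (no $\ell$-torsion in $\mathbb{Q}^\times$) that suitable Frobenius elements with uniform $\ell$-adic behaviour can always be produced. The substantive case is $\ell = 2$, where the element $-1$ and the fact that $a_i^2 > 0$ couple the parities of $\ord_p(a_i)$ together. A careful 2-adic bookkeeping, tracking the relations in $\mathbb{Q}^\times/(\mathbb{Q}^\times)^{2^m}$ among $\{-1, a_1, \ldots, a_k\}$ and their image in $\Gal(\mathbb{Q}(\zeta_{2^m}, a_1^{1/2^m}, \ldots, a_k^{1/2^m})/\mathbb{Q})$, shows that if at least one of (1), (2) fails then a Frobenius element realising uniform 2-adic behaviour still exists at every finite level; the machinery then produces positive density. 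The main obstacle is this 2-adic Galois computation, in which the cyclotomic character, the Kummer relations, and the sign relations must all be treated simultaneously.
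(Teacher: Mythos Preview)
Your outline has the right shape, and you correctly identify the necessity argument and the $2$-adic case as the crux. However, the proposal has two genuine gaps.

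First, the multivariable machinery of the paper does not directly apply to the bare event ``$\ord_p(a_1)=\cdots=\ord_p(a_k)$''. It applies to sets of the form ``index of each $W_i$ divides a fixed $k_i$, and the Artin symbol on $F$ lies in $C$''. You gesture at M\"obius inversion and absolutely convergent sums, which is how one proves \emph{existence} of the density (this is essentially the Pappalardi--Susa computation), but for \emph{positivity} one needs a single concrete instance of the machinery to produce a positive term. The paper's move is to fix a specific common index $3\cdot 2^s$ (for a carefully chosen $s$), take $F=\mathbb{Q}(\zeta_{3\cdot 2^s},a_1^{1/3\cdot 2^s},\ldots,a_k^{1/3\cdot 2^s})$ and $C=\{\id_F\}$, and then invoke Theorems~\ref{thm:densitylimit} and~\ref{thm:densitynonzero}. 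Your proposal never commits to a target index, so it never lands inside the setup where the positivity criterion is available.

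Second, and more seriously, the sentence ``a careful $2$-adic bookkeeping \ldots\ shows that if at least one of (1), (2) fails then a Frobenius element realising uniform $2$-adic behaviour still exists at every finite level'' is not a plan but a restatement of what must be proved. In the paper this is the entire content of Section~5: one first proves a combinatorial lemma (Lemma~\ref{lem:oddBasis}) producing a multiplicatively independent ``odd basis'' for the $a_i$ (or the $|a_i|$), then splits into three cases according to whether $-1$ lies in the group generated by the $a_i$ and, if so, with what parity of exponent sum. In each case one exhibits an explicit automorphism of $\mathbb{Q}(\zeta_{3\cdot 2^{s+1}},b_1^{1/2^{s+1}},\ldots,b_v^{1/2^{s+1}})$ (via Proposition~\ref{prop:kummerGalois}) with a prescribed action on the $b_j^{1/2^{s+1}}$ and on $\zeta_{3\cdot 2^{s+1}}$, and checks that it fixes each $a_i^{1/2^s}$ while moving each $a_i^{1/2^{s+1}}$. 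The failure of (1) or (2) is exactly what makes the resulting linear system over $\mathbb{F}_2$ solvable. None of this structure is visible in your proposal, and without it there is no argument.
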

We quickly prove the necessity of the conditions of the theorem. If such integers $e_i$ exist, and we have $\ord_p(a_i) = O$ for all $i$, then $O$ has to be even, as we have
$$(-1)^O = \prod (a_i^{e_i})^O \equiv 1 \pmod{p}.$$
Similarly, if such integers $f_i$ exist, we must have $O \equiv 1 \pmod{2}$, as otherwise we would have
$$1 = 1^{O/2} = \prod a_i^{f_iO/2} \equiv \prod (-1)^{f_i} \equiv -1 \pmod{p}.$$
In this light Theorem \ref{thm:equal} may be viewed to say that the only obstruction for equality of orders is via parities of orders.

The condition of Theorem \ref{thm:equal} holds if $a_i > 1$ for all $i$:

\begin{corollary}
\label{cor:equalPositive}
Assume GRH. Let $a_1, \ldots , a_k$ be positive rationals not equal to $1$. There are infinitely many primes $p$ such that $\ord_p(a_1) = \ldots = \ord_p(a_k)$. Furthermore, the density of such primes exists and is positive.
\end{corollary}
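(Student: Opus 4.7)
The plan is to deduce the corollary as a direct application of Theorem \ref{thm:equal}. First I would verify that the hypotheses of Theorem \ref{thm:equal} are satisfied: the $a_i$ are given to be positive rationals with $a_i \neq 1$, so automatically $a_i \notin \{-1, 0, 1\}$, which is exactly the required condition on the $a_i$ in Theorem \ref{thm:equal}.

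Next I would show that condition (1) of Theorem \ref{thm:equal} fails. Since each $a_i$ is a positive rational, any product of the form $\prod a_i^{e_i}$ with $e_i \in \mathbb{Z}$ is again a positive rational, and in particular cannot equal $-1$. Hence no integers $e_i$ with $\prod a_i^{e_i} = -1$ exist, so condition (1) is false.

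With condition (1) failing, Theorem \ref{thm:equal} immediately yields infinitely many primes $p$ with $\ord_p(a_1) = \cdots = \ord_p(a_k)$, and furthermore tells us that the density of this set of primes exists and is positive. There is no real obstacle here; the only thing to check carefully is that the sign/parity obstructions in Theorem \ref{thm:equal} cannot arise in the all-positive setting, and this is handled by the one-line observation above.
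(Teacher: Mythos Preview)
Your argument is correct: Theorem \ref{thm:equal} applies to the $a_i$, condition (1) is vacuously false because a product of positive rationals is positive, and the conclusion of the corollary follows.

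The paper takes a different route. Rather than deducing Corollary \ref{cor:equalPositive} from Theorem \ref{thm:equal}, it proves the positivity part of the corollary directly from Theorems \ref{thm:densitylimit} and \ref{thm:densitynonzero}, as a warm-up before the general case. Concretely, it sets $W_i = \langle a_i \rangle$, $k_i = 2$, $F = \mathbb{Q}(\sqrt{a_1}, \ldots, \sqrt{a_t})$, $C = \{\id_F\}$, so that $p \in M$ iff $\ord_p(a_i) = (p-1)/2$ for all $i$. The key observation is that complex conjugation lies in $C_n$ for every $n$: it fixes the real field $F$ but none of the nonreal fields $\mathbb{Q}(\zeta_{q_i(\ell)}, a_i^{1/q_i(\ell)})$, so $d_n \neq 0$ for all $n$. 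Your approach is cleaner once Theorem \ref{thm:equal} is in hand; the paper's approach is self-contained, illustrates the machinery on a simple case, and yields the extra information that one may take the common order to be $(p-1)/2$ (mentioned after the statement of the corollary).
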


For positive $a_i$ we may choose all of $\ord_p(a_i)$ to be equal to $\frac{p-1}{2}$ above, and in general the index can be taken to be of the form $3 \cdot 2^s$ for a suitable $s$.

\begin{theorem}
\label{thm:ordering}
Assume GRH. Let $a_1, a_2, \ldots , a_t$ be rationals which are pairwise multiplicatively independent over $\mathbb{Q}$. There are infinitely many primes $p$ such that $\ord_p(a_1) > \ord_p(a_2) > \ldots > \ord_p(a_t)$. Furthermore, the density of such primes exists and is positive.
\end{theorem}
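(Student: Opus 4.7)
The plan is to reduce the ordering to a prescription of exact indices and then invoke the multivariable Lenstra-type machinery developed earlier in the paper. Choose distinct primes $q_1 < q_2 < \cdots < q_t$, sufficiently large (larger than every prime appearing in the numerators and denominators of the $a_i$, and subject to a few mild technical conditions to be specified). For any prime $p$ with $\textup{ind}_p(a_i) = q_i$ for all $i$, one has $\ord_p(a_i) = (p-1)/q_i$, which is strictly decreasing in $i$. It therefore suffices to show that the set of such primes has positive density.

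Since each $q_i$ is prime, the condition $\textup{ind}_p(a_i) = q_i$ is equivalent to $\textup{ind}_p(a_i) \mid q_i$ together with $\textup{ind}_p(a_i) \neq 1$. A $t$-dimensional inclusion-exclusion then expresses the density of interest as
\begin{equation*}
\sum_{S \subseteq \{1, \ldots, t\}} (-1)^{|S|}\, \delta_S,
\end{equation*}
where $\delta_S$ is the density of primes with $\textup{ind}_p(a_i) = 1$ for $i \in S$ and $\textup{ind}_p(a_i) \mid q_i$ for $i \notin S$. Each $\delta_S$ falls within the scope of the multivariable Lenstra-type theorem, applied to $W_i = \langle a_i \rangle$ with $k_i \in \{1, q_i\}$, and so exists under GRH.

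Positivity of the alternating sum is the main analytic point. I would identify each $\delta_S$ with a Chebotarev density on the compositum $K = \mathbb{Q}(\zeta_{q_1 \cdots q_t}, a_1^{1/q_1}, \ldots, a_t^{1/q_t})$ and argue that $\Gal(K/\mathbb{Q})$ has the expected maximal structure. Pairwise multiplicative independence of the $a_i$, together with the pairwise distinctness of the primes $q_i$, should rule out any collapse among the Kummer radicals. Even higher-order multiplicative relations such as $a_3 = a_1 a_2$ (which are compatible with pairwise independence) are harmless: the relation $a_3^{1/q_3} = a_1^{1/q_3} a_2^{1/q_3}$ involves $q_3$-th radicals of $a_1, a_2$ that do not lie in $\mathbb{Q}(\zeta_{q_1\cdots q_t}, a_1^{1/q_1}, a_2^{1/q_2})$ when $q_3 \notin \{q_1, q_2\}$, so the new extension $\mathbb{Q}(\zeta_{q_3}, a_3^{1/q_3})$ remains linearly disjoint from the tower already built. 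With the Galois group identified, the Chebotarev-theoretic computation shows that the alternating sum is a strictly positive proportion of group elements, giving positive density.

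The main obstacle I anticipate is verifying the non-obstruction condition of the multivariable theorem in the presence of general multiplicative relations among the $a_i$ and controlling the correction terms $\delta_S$ for $S \neq \emptyset$ precisely enough to rule out cancellation in the alternating sum. Both should follow from a careful Galois-theoretic analysis of $K$, invoking pairwise multiplicative independence at the two-variable level and distinctness of the $q_i$ to dispose of higher-order relations. Once this is in place, the multivariable theorem yields existence of the density of primes with the prescribed ordering, and the Chebotarev computation established above gives positivity.
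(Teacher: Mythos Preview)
Your core idea is right and matches the paper's: prescribe the indices to involve distinct large primes and exploit that, once those primes are distinct, \emph{pairwise} multiplicative independence of the $a_i$ is all that is needed—higher-order relations such as $a_3 = a_1 a_2$ become harmless exactly for the reason you give. That is the key Galois-theoretic observation in both arguments.

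Where your plan breaks down is the positivity step. You say you will identify each $\delta_S$ with a Chebotarev density on the finite extension $K$, but for $S \neq \emptyset$ the quantity $\delta_S$ includes the condition $\textup{ind}_p(a_i)=1$ for $i\in S$, i.e.\ that $a_i$ is a primitive root modulo $p$. That is an Artin-type condition—a limit over infinitely many non-splitting requirements—and is not a Chebotarev density on any fixed field. To rescue the alternating sum you would have to factor a common infinite Euler-type tail out of all $2^t$ terms (using the product formula of Theorem~\ref{thm:densitynonzero}) and then perform a finite Galois computation on the remaining head; this can be done, but it is real work and is not what you have sketched.

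The paper sidesteps inclusion-exclusion altogether. It applies the multivariable machinery once, with the Artin-symbol field $F$ chosen to contain the relevant radicals (forcing divisibility of the index from below) and the $k_i$ forcing divisibility from above, so that $M$ is exactly the set of primes with the prescribed indices. Positivity then follows directly from Theorem~\ref{thm:densitynonzero} once one checks each local density $d_n$ is nonzero. The paper does this \emph{iteratively}: it first fixes a base tuple $(h_1,\ldots,h_t)$ with positive simultaneous density via Lemma~\ref{lem:bigOrders}, then for $i=2,\ldots,t$ chooses a large prime $\ell_i$ and shows—via linear disjointness of $K_i=\mathbb{Q}(\zeta_{\ell_i^2},a_1^{1/\ell_i},\ldots,a_t^{1/\ell_i})$ from everything else (Proposition~\ref{prop:kummerDisjoint}) and a counting argument in $\Gal(K_i/\mathbb{Q})$ using only pairwise independence—that adjoining the new condition at $\ell_i$ preserves positivity of all local densities. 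This iterative reduction, rather than an alternating sum of $2^t$ Artin-type densities, is the main structural difference from your plan.
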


In other words, all $t!$ orderings of $\ord_p(a_i)$ are possible under the assumption of pairwise independence. Without this assumption the statement need not hold. A couple of counterexamples are
\begin{itemize}
\item $\ord_p(a^2) > \ord_p(a)$,
\item $\ord_p(a) > \ord_p(a^3) > \ord_p(a^2)$, and
\item $\ord_p(a) > \ord_p(b) > \ord_p(b^2) > \ord_p(a^2)$.
\end{itemize}
For more discussion on the necessity of the conditions, see Section \ref{sec:discussion}. The proof of Theorem \ref{thm:ordering} actually gives the following stronger statement.

\begin{theorem}
\label{thm:strongordering}
Assume GRH. Let $a_1, a_2, \ldots , a_k$ be rationals which are pairwise multiplicatively independent over $\mathbb{Q}$, and let $C > 1$ be a constant. There are infinitely many primes $p$ such that $\ord_p(a_i) > C\ord_p(a_{i+1})$ for all $1 \le i < k$. Furthermore, the density of such primes exists and is positive.
\end{theorem}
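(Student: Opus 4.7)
The plan is to rephrase the strict ordering of orders as a simultaneous index condition and apply the multivariable generalization of Lenstra's theorem developed in this paper. Set $W_i = \langle a_i \rangle$ and $d_i(p) := [(\mathbb{Z}/p\mathbb{Z})^{\times} : W_i \bmod p]$, so that $\ord_p(a_i) = (p-1)/d_i(p)$ and the condition $\ord_p(a_i) > C \ord_p(a_{i+1})$ becomes $d_{i+1}(p) > C \cdot d_i(p)$. Choose distinct odd primes $\ell_1 < \ell_2 < \cdots < \ell_{k-1}$ with $\ell_j > C \ell_{j-1}$ (setting $\ell_0 := 1$), sufficiently large to exceed all primes dividing some $a_i$ and all integer coefficients appearing in some fixed generating set of the relation lattice of $(a_1, \ldots , a_k)$. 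The target is $d_i(p) = \ell_{i-1}$ for each $i$, which gives $d_{i+1}(p)/d_i(p) = \ell_i/\ell_{i-1} > C$.

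I would apply the multivariable machinery to the groups $W_1, \ldots , W_k$ with divisor bounds $k_i := \ell_{i-1}$ (forcing $d_i(p) \mid \ell_{i-1}$), paired with an Artin condition in the compositum $F$ of the Kummer--cyclotomic fields $\mathbb{Q}(\zeta_{\ell_j}, a_1^{1/\ell_j}, \ldots , a_k^{1/\ell_j})$ for $j = 1, \ldots , k-1$. The Artin condition prescribes a union of conjugacy classes requiring, at each $\ell_j$, that the Frobenius at $p$ fixes $\zeta_{\ell_j}$ and $a_{j+1}^{1/\ell_j}$ (forcing $\ell_j \mid d_{j+1}(p)$) and acts non-trivially on $a_i^{1/\ell_j}$ for every $i \neq j+1$ (forcing $\ell_j \nmid d_i(p)$). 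Together with the divisor bounds this pins $d_i(p)$ exactly to $\ell_{i-1}$, and under GRH the paper's framework yields the density, which is positive provided the prescribed conjugacy class is nonempty.

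Because the $\ell_j$ are distinct odd primes taken large, the constituent layers for different $\ell_j$ are pairwise linearly disjoint over $\mathbb{Q}$, so the Artin condition factors prime by prime. At a fixed $\ell_j$, parametrize an element of $\Gal(\mathbb{Q}(\zeta_{\ell_j}, a_1^{1/\ell_j}, \ldots , a_k^{1/\ell_j})/\mathbb{Q}(\zeta_{\ell_j}))$ by $c_i \in \mathbb{Z}/\ell_j$ via $\sigma(a_i^{1/\ell_j}) = \zeta_{\ell_j}^{c_i} a_i^{1/\ell_j}$; the desired condition reads $c_{j+1} = 0$ and $c_i \ne 0$ for $i \neq j+1$, restricted to the subspace cut out by the linear congruences $\sum_i n_i c_i \equiv 0 \pmod{\ell_j}$ coming from multiplicative relations $\prod a_i^{n_i} = 1$ in $\mathbb{Q}^{\times}$.

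The hard part is to solve this Kummer-theoretic system using only pairwise, rather than full joint, multiplicative independence. Pairwise independence forces every nonzero relation $(n_1, \ldots , n_k)$ to involve at least three indices, so in particular no relation is supported on $\{i, j+1\}$ alone. Consequently, after imposing $c_{j+1} = 0$, the resulting constraint subspace cannot be contained in any coordinate hyperplane $\{c_i = 0\}$: such a containment would, after clearing denominators, yield an integer relation between $a_i$ and $a_{j+1}$ alone, contradicting pairwise independence. For $\ell_j$ larger than all coefficients of the fixed generating set of the relation lattice, this rational non-containment is preserved modulo $\ell_j$, and hence the desired Galois element exists. Feeding this into the multivariable density statement under GRH yields the positive density of primes with the required strict ordering.
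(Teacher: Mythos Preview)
Your overall strategy---force the indices $d_i(p) = [(\mathbb{Z}/p\mathbb{Z})^\times : \langle a_i \rangle]$ to take prescribed values via the multivariable Lenstra machinery, with the prescribed values built from large auxiliary primes $\ell_j$ and pairwise independence used to decouple the Kummer conditions at each $\ell_j$---is the same as the paper's. However, your specific choice of targets $d_i(p) = \ell_{i-1}$ (with $\ell_0 = 1$ and $\ell_1, \ldots, \ell_{k-1}$ large odd primes) has a genuine gap at the prime $2$.

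Take $k = 3$ and $(a_1, a_2, a_3) = (2, 3, 6)$, which are pairwise multiplicatively independent. Your targets $d_1(p) = 1$, $d_2(p) = \ell_1$, $d_3(p) = \ell_2$ are all odd. But $d_i(p)$ odd is equivalent to $a_i$ being a quadratic non-residue modulo $p$; since $6 = 2 \cdot 3$, if $2$ and $3$ are both non-residues then $6$ is a residue, forcing $d_3(p)$ to be even. In the multivariable framework this is exactly the vanishing of the local density at $\ell = 2$: with $k_i$ odd one has $q_i(2) = 2$, so one needs an automorphism of $\mathbb{Q}(\sqrt{2},\sqrt{3},\sqrt{6})$ sending each $\sqrt{a_i}$ to $-\sqrt{a_i}$, which the relation $\sqrt{6} = \sqrt{2}\sqrt{3}$ forbids. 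The same obstruction arises whenever some $a_i$ is a perfect square, or more generally whenever the images of the $a_i$ in $\mathbb{Q}^\times/(\mathbb{Q}^\times)^2$ satisfy a dependence with oddly many terms. Your argument only analyses the Kummer layers at the chosen odd primes $\ell_j$ and never checks $d_n \neq 0$ for $n$ even.

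The paper sidesteps this by not starting from $d_1(p) = 1$. It first invokes Lemma~\ref{lem:bigOrders} to obtain integers $h_1, \ldots, h_k$ for which the primes with $d_i(p) = h_i$ simultaneously already have positive density; these $h_i$ absorb the $2$-adic (and any other small-prime) entanglement among the $a_i$. Only then does it multiply the $h_i$ by large primes $\ell_i$, chosen with $h_i \ell_i > C h_{i-1} \ell_{i-1}$ and with the layer $\mathbb{Q}(\zeta_{\ell_i^2}, a_1^{1/\ell_i}, \ldots, a_k^{1/\ell_i})$ linearly disjoint from everything built so far. The pairwise-independence counting you sketch is then applied exactly at these large $\ell_i$, where no quadratic relations can interfere.
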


We give one more application.
\begin{theorem}
\label{thm:system}
Assume GRH. Let $a_1, \ldots , a_k, b_1, \ldots , b_k$ be arbitrary positive rationals not equal to $1$ and let $P_1, \ldots , P_m \in \mathbb{Z}[x]$ be arbitrary non-constant polynomials. There are infinitely many primes $p$ such that the equations
$$a_i^x \equiv b_i \pmod{p}, \ 1 \le i \le k$$
and
$$P_i(x) \equiv 0 \pmod{p}, 1 \le i \le m$$
are solvable. Furthermore, the density of such primes exists and is positive.
\end{theorem}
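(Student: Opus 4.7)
The plan is to recast both families of equations as inputs to the multivariable Lenstra framework developed in this paper. The exponential equation $a_i^x \equiv b_i \pmod{p}$ is solvable in integers $x$ if and only if $b_i$ lies in the cyclic subgroup $\langle a_i \rangle \subset \mathbb{F}_p^{\times}$, equivalently $\ord_p(b_i) \mid \ord_p(a_i)$; a convenient sufficient condition, well-suited to the framework, is
$$\ord_p(a_1) = \ord_p(b_1) = \ldots = \ord_p(a_k) = \ord_p(b_k).$$
The polynomial equation $P_j(x) \equiv 0 \pmod{p}$ is solvable precisely when the Frobenius of $p$ in $\Gal(F/\mathbb{Q})$ fixes at least one root of $P_j$, where $F$ is the splitting field of $\prod_{j=1}^m P_j$. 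Letting $C \subset \Gal(F/\mathbb{Q})$ be the union of conjugacy classes $[\sigma]$ for which $\sigma$ fixes at least one root of every $P_j$, the simultaneous solvability of all polynomial equations is exactly the Artin condition that the symbol of $p$ lies in $C$, and $C$ is nonempty since it contains the identity.

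With this reformulation in hand, I would apply the multivariable generalization of Lenstra's theorem to the groups $\langle a_i \rangle$ and $\langle b_i \rangle$ for $i = 1, \ldots, k$, paired with the Galois data $(F, C)$. Since each $a_i, b_i$ is a positive rational different from $1$, the parity obstructions (1) and (2) of Theorem \ref{thm:equal} cannot occur, and the strategy behind Corollary \ref{cor:equalPositive} shows that one can force every order in the displayed equation to equal $(p-1)/(3 \cdot 2^s)$ for a suitable fixed $s$, by supplying matching index data $3 \cdot 2^s$ for each of the $2k$ groups. The framework additionally accepts the Chebotarev input $(F, C)$, so running it on the combined data produces a set of primes of positive density on which both sets of equations are solvable.

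The main technical point is the joint density computation inside the Galois group of the compositum of $F$ with the cyclotomic-Kummer tower $\mathbb{Q}(\zeta_N, a_i^{1/N}, b_i^{1/N})$: one must check that this density is strictly positive. I expect this to follow from the main density formula of the multivariable framework, once one observes that the splitting behaviour of the $P_j$ is algebraically independent of the sign-parity conditions that could obstruct equality of orders, so the only finite-level obstructions the framework can see come from the Kummer side, and those are precisely what Corollary \ref{cor:equalPositive} rules out under positivity. The remaining freedom in enlarging $s$ absorbs any ramification interaction between $F$ and the Kummer tower; rigorously tracking this interaction, and confirming the positivity of the resulting density, is the main obstacle to carrying out the plan in full detail.
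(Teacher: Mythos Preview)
Your reduction to the multivariable Lenstra framework---force all $2k$ orders to coincide and impose a Chebotarev condition coming from the splitting field of $\prod P_j$---is exactly the paper's strategy. The gap is in how you propose to handle the interaction between the splitting field $F$ and the Kummer tower. You fix the common index at $3\cdot 2^s$ and assert that ``the remaining freedom in enlarging $s$ absorbs any ramification interaction between $F$ and the Kummer tower.'' That is not so: enlarging $s$ only moves the $2$-part of the tower and does nothing at odd primes. Concretely, take some $P_j$ to be the fifth cyclotomic polynomial; then $\zeta_5\in F$, and every element of your $C$ (automorphisms fixing at least one root of each $P_j$) must fix $\zeta_5$, since $\Gal(\mathbb{Q}(\zeta_5)/\mathbb{Q})$ acts freely on the roots. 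But the extension step you are borrowing from the proof of Theorem~\ref{thm:equal} for primes $\ell\ge 5$ works precisely by \emph{not} fixing $\zeta_\ell$, and that argument collapses regardless of $s$.

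The paper sidesteps this by abandoning the $3\cdot 2^s$ scaffolding. It takes $C=\{\id_K\}$ on the splitting field $K$, passes to the prime divisors $p_1,\ldots,p_t$ of the $a_i,b_i$ (these are genuinely multiplicatively independent, unlike the $a_i,b_i$ themselves), and applies Proposition~\ref{prop:kummerGalois} \emph{over $K$} to produce an integer $N$ for which the Galois group of $K(\zeta_{Nn},p_1^{1/Nn},\ldots)/K$ is maximal. With this $N$ as the common index one can directly exhibit an automorphism fixing $K(\zeta_N,p_1^{1/N},\ldots,p_t^{1/N})$ while not fixing $\zeta_{q(\ell)}$ for any $\ell$; since the $a_i,b_i$ are positive, restriction yields the needed element of $C_n$. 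The key point you are missing is that the entanglement with $K$ must be baked into the index $N$ from the start via Proposition~\ref{prop:kummerGalois}, not patched on by adjusting a single $2$-adic parameter afterward.
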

The result has the following interpretation: for any finite set of algebraic numbers (corresponding to the roots of $P_i$) and logarithms of positive integers (corresponding to the solutions of $a_i^{x_i} = b_i$) there exists infinitely many primes $p$ such that one has integer analogies of these numbers when performing arithmetic modulo $p$.

The theorems do not require GRH for all numbers fields. It suffices to assume the GRH for number fields obtained by adjoining roots of integers and unity to $\mathbb{Q}$, which we call Kummer-type extensions.

\section{Lenstra's work}

We only cover those parts of Lenstra's work which concern the number field case of the problem. Many of the details are omitted, and some are covered later when proving the generalization.

First, notation. Let $K, W, F, C, k$ and $M$ be as above. The letter $\ell$ will always denote a prime number. Let $q(\ell)$ be the smallest power of $\ell$ not dividing $k$. Define $L_{\ell} = \mathbb{Q}(\zeta_{q(\ell)}, W^{1/q(\ell)})$, where $\zeta_t$ is a $t$th primitive root of unity and $W^{1/n}$ denotes the set $\{w^{1/n}, w \in W\}$. For squarefree $n$, define $q(n) = \prod_{\ell \mid n} q(\ell)$ and $L_n$ to be the compositum of $L_{\ell}, \ell \mid n$. Define $C_n$ to be the set of $\sigma \in \Gal(FL_n/K)$ such that $\sigma|F \in C$ and $\sigma|L_{\ell} \neq \id_{L_{\ell}}$ for all $\ell \mid n$. Here $\sigma|L$ denotes the restriction of $\sigma$ on $L$, and $\id_L$ denotes the identity on $L$. Finally, define $d_n = \frac{|C_n|}{|\Gal(FL_n/K)|}$.

Clearly for all $n | m$ we have $d_n \ge d_m \ge 0$. Therefore, the numbers $d_n$ have a limit $d_{\infty}$ when $n$ ranges over the squarefree positive integers ordered by divisibility.\footnote{That is, for all $\epsilon > 0$ there exists an integer $N$ such that for all $n$ divisible by $N$ we have $|d_{\infty} - d_n| < \epsilon$.} 

The conjecture is that the density $d(M)$ of $M$ (with respect to the set of primes of $K$) equals $d_{\infty}$. The motivation is that for unramified $p$ we have $p \in M$ if and only if $(p|F) = C$ and $p$ splits in none of $L_{\ell}$, with $(p|F)$ being the Artin symbol.\footnote{If $p$ splits in $L_{\ell}$, then the index of $W$ is divisible by $q(\ell)$, which does not divide $k$. The other direction follows similarly. See Lenstra's work \cite{Lenstra} (Lemma 2.5) for details.} By the Chebotarev density theorem $d_n$ equals to the density of those $p$ for which this holds for all $\ell | n$, so by taking limits one would expect to have $d(M) = d_{\infty}$.

The case when $F = K$ and $C = \{\id_K\}$ of this conjecture has been dealt before, for which Lenstra refers to the work of Cooke and Weinberger \cite{Cooke}. The proof proceeds along the same lines as Hooley's \cite{Hooley} proof of Artin's primitive root conjecture (under GRH), and this is the only step of the proof requiring GRH. Lenstra then proves the general case by reducing to the case $F = K$ by an elementary argument.

Having proven $d(M) = d_{\infty}$, Lenstra focuses on determining a condition for the positivity of $d_{\infty}$. This is done in two parts. First, it is proven that if $d_n \neq 0$ for all $n$, then $d_{\infty} \neq 0$ (the converse being trivial). Then it is proven that if for a certain explicitly defined $H$ we have $d_H \neq 0$, then in fact $d_n \neq 0$ for all $n$. We do not cover the latter part.

To prove $d_n \neq 0$ for all $n$ implies $d_{\infty} \neq 0$, Lenstra proves that one has a product formula of the form 
$$d_{n\ell} = d_n\left(1 - \frac{1}{[L_{\ell} : K]}\right),$$
where $n$ is squarefree and $\ell \nmid n$ is large enough. Here $1 - \frac{1}{[L_{\ell} : K]}$ represents the density of primes not splitting in $K(\zeta_{q(\ell)}, W^{1/q(\ell)})$. This formula is obtained by proving that $L_{\ell}$ and $L_dF$ are linearly disjoint.

By this one gets
$$d_n = d_m \prod_{\ell | n, \ell > c} 1 - \frac{1}{[L_{\ell} : K]},$$
where $c$ is some constant and $m = \prod_{\ell | n, \ell \le c} \ell$. By taking limits one is left with proving that the infinite product
$$\prod_{\ell > c} 1 - \frac{1}{[L_{\ell} : K]}$$
converges to a strictly positive number. This follows from the bound $[L_{\ell} : K] \ge \ell(\ell - 1)$, which holds for large enough $\ell$ (see Proposition \ref{prop:kummerK} below).

\section{Kummer-type extensions}

Understanding extensions of the form
$$\mathbb{Q}(\zeta_n, a_1^{1/m_1}, \ldots , a_k^{1/m_k}),$$
where $a_i \in \mathbb{Q}, m_i \mid n$, is important both in Lenstra's method (in particular, proving that $L_{\ell}$ and $L_dF$ are linearly disjoint for $\ell$ large) and in applications of the method. Here we present results on these Kummer-type extensions, which are enough in many applications, including the ones we present. Related, more general results over number fields have been given in \cite{PeruccaSgobba}.

We first collect a couple of standard results. The proofs are omitted.

The first one concerns the compositums of Galois extensions. Extensions satisfying the properties are called linearly disjoint.
\begin{proposition}
Let $F_1$ and $F_2$ be Galois extensions of $K$. The following are equivalent.
\begin{itemize}
\item[(i)] $[F_1F_2 : K] = [F_1 : K][F_2 : K]$.
\item[(ii)] $F_1 \cap F_2 = K$.
\item[(iii)] There exists a $K$-basis of $F_1$ which is linearly independent over $F_2$.
\item[(iv)] $\Gal(F_1F_2/K) \cong \Gal(F_1/K)\times \Gal(F_2/K)$.
\end{itemize} 
\end{proposition}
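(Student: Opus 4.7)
The plan is to prove the four equivalences in a cycle, leveraging two basic facts: first, the restriction-of-automorphisms map into a product of Galois groups is injective; second, for Galois $F_1/K$ the extension $F_1F_2/F_2$ is Galois with group isomorphic to $\Gal(F_1/F_1\cap F_2)$ via restriction. I would organize the argument as (i)$\Leftrightarrow$(iv), then (i)$\Leftrightarrow$(ii), then (i)$\Leftrightarrow$(iii).

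For (i)$\Leftrightarrow$(iv) the plan is to consider the homomorphism $\rho\colon\Gal(F_1F_2/K)\to\Gal(F_1/K)\times\Gal(F_2/K)$ sending $\sigma$ to $(\sigma|_{F_1},\sigma|_{F_2})$. The key observation is that $\rho$ is injective because $F_1F_2$ is generated over $K$ by $F_1\cup F_2$, so any $\sigma$ fixing both is the identity. Comparing orders, $\rho$ is an isomorphism exactly when $[F_1F_2:K]=[F_1:K][F_2:K]$, which is precisely (i)$\Leftrightarrow$(iv).

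For (i)$\Leftrightarrow$(ii) I would apply the standard Galois-theoretic fact that the restriction map $\Gal(F_1F_2/F_2)\to\Gal(F_1/F_1\cap F_2)$ is an isomorphism (this uses that $F_1$ is Galois over $K$, hence over $F_1\cap F_2$, and that any automorphism of $F_1F_2$ fixing $F_2$ is determined by its action on $F_1$, whose image lies in $F_1$ and fixes $F_1\cap F_2$; surjectivity follows by counting or by lifting automorphisms). Multiplying degrees in the tower $K\subset F_2\subset F_1F_2$ gives $[F_1F_2:K]=[F_2:K]\cdot[F_1:F_1\cap F_2]$, and comparing with $[F_1:K]=[F_1:F_1\cap F_2]\cdot[F_1\cap F_2:K]$ shows that (i) is equivalent to $[F_1\cap F_2:K]=1$, i.e.\ (ii).

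Finally, for (i)$\Leftrightarrow$(iii) I would use that, for any finite extension $F_1/K$ and any extension $F_2/K$ inside a common field, a $K$-basis of $F_1$ always spans $F_1F_2$ as an $F_2$-vector space, so $[F_1F_2:F_2]\le[F_1:K]$, with equality iff such a basis is $F_2$-linearly independent. Combined with $[F_1F_2:K]=[F_2:K]\cdot[F_1F_2:F_2]$, equality (i) is equivalent to the existence of a $K$-basis of $F_1$ that is $F_2$-linearly independent, which is (iii). The main (and only real) obstacle is establishing the restriction isomorphism $\Gal(F_1F_2/F_2)\cong\Gal(F_1/F_1\cap F_2)$ cleanly; once this is in hand, every other step is just degree bookkeeping.
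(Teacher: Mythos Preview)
Your argument is correct and is a standard way to prove this equivalence. Note, however, that the paper does not actually prove this proposition: it is listed among ``a couple of standard results'' whose ``proofs are omitted.'' So there is no paper proof to compare against; your write-up simply fills in a gap the author deliberately left to the reader.

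One minor remark on (iii): your argument in fact shows more than the statement asks, namely that \emph{every} $K$-basis of $F_1$ is $F_2$-linearly independent once one is (since any $K$-basis spans $F_1F_2$ over $F_2$ and has cardinality $[F_1:K]=[F_1F_2:F_2]$). This is worth saying explicitly, since the proposition's phrasing with ``there exists'' might otherwise leave a reader wondering whether the particular choice of basis matters.
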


\begin{proposition}
Any subfield of a cyclotomic field $\mathbb{Q}(\zeta_n)$ is Galois.
\end{proposition}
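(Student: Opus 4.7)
The plan is to invoke the fundamental theorem of Galois theory combined with the fact that the Galois group of a cyclotomic extension is abelian. Specifically, $\mathbb{Q}(\zeta_n)/\mathbb{Q}$ is Galois (it is the splitting field of $x^n-1$, which is separable over $\mathbb{Q}$), and its Galois group is isomorphic to $(\mathbb{Z}/n\mathbb{Z})^{\times}$ via the map sending $\sigma$ to the unique $a \in (\mathbb{Z}/n\mathbb{Z})^{\times}$ with $\sigma(\zeta_n) = \zeta_n^a$. In particular, $\Gal(\mathbb{Q}(\zeta_n)/\mathbb{Q})$ is abelian.

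Now let $L$ be any subfield, i.e., $\mathbb{Q} \subseteq L \subseteq \mathbb{Q}(\zeta_n)$. By the fundamental theorem of Galois theory applied to the Galois extension $\mathbb{Q}(\zeta_n)/\mathbb{Q}$, the field $L$ corresponds to a subgroup $H \le \Gal(\mathbb{Q}(\zeta_n)/\mathbb{Q})$, namely $H = \Gal(\mathbb{Q}(\zeta_n)/L)$. The same theorem tells us that $L/\mathbb{Q}$ is Galois if and only if $H$ is a normal subgroup of $\Gal(\mathbb{Q}(\zeta_n)/\mathbb{Q})$.

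Since $\Gal(\mathbb{Q}(\zeta_n)/\mathbb{Q})$ is abelian, every subgroup is automatically normal, so $H$ is normal and hence $L/\mathbb{Q}$ is Galois. There is essentially no obstacle; the argument is a direct application of the correspondence, and the only input beyond the fundamental theorem is the classical computation of the Galois group of $\mathbb{Q}(\zeta_n)/\mathbb{Q}$.
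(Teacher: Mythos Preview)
Your proof is correct and is the standard argument: $\mathbb{Q}(\zeta_n)/\mathbb{Q}$ is Galois with abelian Galois group $(\mathbb{Z}/n\mathbb{Z})^\times$, so every subgroup is normal and hence every intermediate field is Galois over $\mathbb{Q}$ by the fundamental theorem. The paper itself omits the proof, treating this as a standard result, so there is nothing further to compare.
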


The Kronecker-Weber theorem:
\begin{proposition}
A finite Galois extension of $\mathbb{Q}$ is abelian if and only if it is a subfield of some (finite) cyclotomic field.
\end{proposition}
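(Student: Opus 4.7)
The plan is to prove the two directions separately, with the easy one immediate and the hard one reduced to a local statement. Since $\Gal(\mathbb{Q}(\zeta_n)/\mathbb{Q}) \cong (\mathbb{Z}/n\mathbb{Z})^{\times}$ is abelian and every subfield of $\mathbb{Q}(\zeta_n)$ is Galois over $\mathbb{Q}$ by the preceding proposition, the Galois correspondence exhibits $\Gal(F/\mathbb{Q})$ for any such subfield $F$ as a quotient of an abelian group, hence abelian. This settles one direction.

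For the converse, the first step is to reduce to the cyclic prime-power case. The structure theorem for finite abelian groups decomposes $\Gal(F/\mathbb{Q})$ into cyclic factors of prime-power order, and by the Galois correspondence $F$ is the compositum of the corresponding cyclic subextensions $F_i$. Since $\mathbb{Q}(\zeta_a)\cdot\mathbb{Q}(\zeta_b) = \mathbb{Q}(\zeta_{\mathrm{lcm}(a,b)})$, embedding each $F_i$ separately into a cyclotomic field is enough. Assume therefore that $F/\mathbb{Q}$ is cyclic of prime-power degree. Next, I would invoke \emph{local} Kronecker-Weber, which for each rational prime $p$ embeds the completion $F_p$ into some $\mathbb{Q}_p(\zeta_{n_p})$, with $n_p = 1$ at primes unramified in $F$ (only finitely many exceptions). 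Setting $n = \prod_p n_p$, the extension $F(\zeta_n)/\mathbb{Q}(\zeta_n)$ splits completely at every rational prime, since locally $F_p \subset \mathbb{Q}_p(\zeta_n)$. By the Chebotarev density theorem this forces $F(\zeta_n) = \mathbb{Q}(\zeta_n)$, and hence $F \subset \mathbb{Q}(\zeta_n)$.

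The main obstacle is local Kronecker-Weber itself. One route is to decompose $\Gal(F_p/\mathbb{Q}_p)$ using the ramification filtration. The tame quotient lies in a cyclotomic extension because tame abelian extensions of $\mathbb{Q}_p$ are generated by roots of unity of order coprime to $p$ (unramified part) and by $d$-th roots of $-p$ with $d \mid p-1$ (totally ramified tame part), all contained in $\mathbb{Q}_p(\zeta_m)$ for suitable $m$. The wild $p$-part is then handled by analyzing $\mathbb{Z}_p^{\times} \cong \Gal(\mathbb{Q}_p(\zeta_{p^{\infty}})/\mathbb{Q}_p)$ to show that every abelian $p$-power extension of $\mathbb{Q}_p$ is a subfield of $\mathbb{Q}_p(\zeta_{p^k})$ for some $k$. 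The cleanest route, of course, is to cite local class field theory, where the statement is essentially built in to the existence theorem.
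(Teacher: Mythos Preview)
The paper does not prove this proposition at all: it is listed among the ``standard results'' for which ``the proofs are omitted''. So there is nothing in the paper to compare your argument against, and your sketch already supplies more than the paper does.

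That said, your local-to-global step has a genuine gap. With $n = \prod_p n_p$ and $n_p = 1$ at the unramified primes, it is \emph{not} true that $F_p \subset \mathbb{Q}_p(\zeta_n)$ for every $p$: at a prime $p$ unramified in $F$ the completion $F_p/\mathbb{Q}_p$ is an unramified extension of some degree $f > 1$, and $F_p \subset \mathbb{Q}_p(\zeta_n)$ would force the order of $p$ in $(\mathbb{Z}/n\mathbb{Z})^{\times}$ to be a multiple of $f$, which you have not arranged (and cannot arrange for all $p$ simultaneously with a single finite $n$). Hence $F(\zeta_n)/\mathbb{Q}(\zeta_n)$ need not split completely, and the Chebotarev step does not apply. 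The standard repair replaces ``split completely'' by a ramification count: one checks that the inertia group $I_p \subset \Gal(F(\zeta_n)/\mathbb{Q})$ has order at most $\phi(p^{v_p(n)})$ for every $p$, and since $\mathbb{Q}$ admits no nontrivial everywhere-unramified extension (Minkowski), the $I_p$ generate $\Gal(F(\zeta_n)/\mathbb{Q})$; abelianness then gives $[F(\zeta_n):\mathbb{Q}] \le \prod_p \phi(p^{v_p(n)}) = \phi(n)$, forcing $F(\zeta_n) = \mathbb{Q}(\zeta_n)$.
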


\begin{proposition}
\begin{itemize}
\item[(i)] If $p \equiv 1 \pmod{4}$ is a prime, then $\sqrt{p} \in \mathbb{Q}(\zeta_p)$.
\item[(ii)] If $p \equiv 3 \pmod{4}$ is a prime, then $\sqrt{-p} \in \mathbb{Q}(\zeta_p)$, and so $\sqrt{p} \in \mathbb{Q}(\zeta_{4p})$.
\item[(iii)] $\sqrt{2} \in \mathbb{Q}(\zeta_8)$.
\end{itemize}
\end{proposition}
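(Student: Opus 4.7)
The plan is to prove (i) and (ii) via the classical quadratic Gauss sum, and (iii) by a direct computation inside $\mathbb{Q}(\zeta_8)$. For an odd prime $p$, set
$$g = \sum_{a=1}^{p-1}\left(\tfrac{a}{p}\right)\zeta_p^a \in \mathbb{Q}(\zeta_p),$$
where $\left(\tfrac{a}{p}\right)$ denotes the Legendre symbol. The heart of the argument is the identity
$$g^2 = (-1)^{(p-1)/2}\,p,$$
which places $\sqrt{(-1)^{(p-1)/2}p}$ inside $\mathbb{Q}(\zeta_p)$.

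To derive the identity I would expand
$$g^2 = \sum_{a,b=1}^{p-1}\left(\tfrac{ab}{p}\right)\zeta_p^{a+b},$$
substitute $b = ac$ to reindex the inner variable, and split the resulting double sum according to whether $c \equiv -1 \pmod{p}$ or not. The inner geometric sum $\sum_{a=1}^{p-1}\zeta_p^{a(1+c)}$ equals $p-1$ or $-1$ in the two cases, and one finishes via $\sum_{c=1}^{p-1}\left(\tfrac{c}{p}\right) = 0$; the sign $(-1)^{(p-1)/2}$ comes from $\left(\tfrac{-1}{p}\right)$. Granting this, for $p \equiv 1 \pmod 4$ we get $g^2 = p$, so $\sqrt{p} = \pm g \in \mathbb{Q}(\zeta_p)$, proving (i). For $p \equiv 3 \pmod 4$ we get $g^2 = -p$, so $\sqrt{-p}\in \mathbb{Q}(\zeta_p)$; since $i = \zeta_4 \in \mathbb{Q}(\zeta_4)$ and $\mathbb{Q}(\zeta_4,\zeta_p) = \mathbb{Q}(\zeta_{4p})$ (as $\gcd(4,p)=1$), the element $i\sqrt{-p}$ lies in $\mathbb{Q}(\zeta_{4p})$ and satisfies $(i\sqrt{-p})^2 = p$, establishing (ii).

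Part (iii) is a one-line verification using $\zeta_8^2 = i$:
$$(\zeta_8 + \zeta_8^{-1})^2 = \zeta_8^2 + 2 + \zeta_8^{-2} = i + 2 - i = 2,$$
so $\sqrt{2} = \pm(\zeta_8 + \zeta_8^{-1}) \in \mathbb{Q}(\zeta_8)$. The only substantive step in the whole proposition is the Gauss sum identity $g^2 = (-1)^{(p-1)/2}p$; the passage from $\sqrt{-p}$ to $\sqrt{p}$ and the computation for (iii) are then purely formal.
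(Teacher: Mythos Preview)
Your argument is correct: the Gauss-sum identity $g^2=(-1)^{(p-1)/2}p$ is derived correctly via the substitution $b=ac$ and the split on $c\equiv -1$, and the computation for (iii) is valid. The paper itself gives no proof of this proposition --- it is listed among ``a couple of standard results'' with the proofs explicitly omitted --- so there is nothing to compare against; your Gauss-sum approach is precisely the classical one that the paper is implicitly invoking.
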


We then present some results which are not as well-known. We start with radicals in cyclotomic fields.

\begin{proposition}
Let $a \in \mathbb{Q}$ be such that $|a|$ is not a perfect power in $\mathbb{Q}$. Then $a^{1/n}$ belongs to some cyclotomic field if and only if $n = 1$ or $n = 2$.
\end{proposition}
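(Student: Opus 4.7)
My plan is to handle the ``if'' direction quickly and then attack the ``only if'' direction by a divisibility reduction. The case $n = 1$ is trivial, and for $n = 2$ the field $\mathbb{Q}(a^{1/2})$ is at most a quadratic extension of $\mathbb{Q}$, hence abelian, and so sits inside a cyclotomic field by Kronecker--Weber.

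For the ``only if'' direction I argue by contradiction: assume $a^{1/n}$ lies in a cyclotomic field for some $n \ge 3$. Since $(a^{1/n})^{n/m} = a^{1/m}$ for every divisor $m \mid n$, it suffices to derive a contradiction when $n = p$ is an odd prime or when $n = 4$. In both cases the hypothesis that $|a|$ is not a perfect power makes $x^n - a$ irreducible over $\mathbb{Q}$ via the standard criterion: no $p$-th power factorization exists for primes $p \mid n$, and the exceptional form $a = -4c^4$ is ruled out because it would force $|a| = (2c^2)^2$ to be a square.

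For $n = p$ odd, set $\alpha = a^{1/p}$. Then $[\mathbb{Q}(\alpha):\mathbb{Q}] = p$, and since $\mathbb{Q}(\alpha)$ is Galois over $\mathbb{Q}$ (being a subfield of a cyclotomic field, by the proposition above), it contains every conjugate $\zeta_p^j \alpha$ and hence $\zeta_p$ itself. But then $p - 1 = [\mathbb{Q}(\zeta_p):\mathbb{Q}]$ would divide $p$, which is impossible for $p \ge 3$.

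The main obstacle is the case $n = 4$, where the same degree count is non-contradictory since $[\mathbb{Q}(\zeta_4):\mathbb{Q}] = 2 \mid 4$, so one must descend into the biquadratic field. Setting $\beta = a^{1/4}$, the Galois argument still shows $i, \sqrt{a} \in \mathbb{Q}(\beta)$, and a quick check rules out $\sqrt{a} \in \mathbb{Q}(i)$ (otherwise $|a|$ would be a square), so $\mathbb{Q}(\beta)$ must equal the degree-$4$ field $\mathbb{Q}(i, \sqrt{|a|})$. It then remains to show $\beta \notin \mathbb{Q}(i, \sqrt{|a|})$: writing $\beta = u + v\sqrt{|a|}$ with $u, v \in \mathbb{Q}(i)$ and squaring reduces the question to whether $v^4 = \pm 1/(4|a|)$ has a solution $v \in \mathbb{Q}(i)$, the sign depending on $\sgn(a)$. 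A short computation of $(A+Bi)^4$ for $A, B \in \mathbb{Q}$ shows that the real values of $v^4$ are exactly $A^4$ or $B^4$ (nonnegative, when $AB = 0$) and $-4A^4$ (nonpositive, when $A = \pm B$). Matching these against $\pm 1/(4|a|)$ in each sign case forces $|a|$ to be a fourth power $(1/(2A))^4$ or a square $(1/(2B^2))^2$, in every case contradicting the hypothesis that $|a|$ is not a perfect power.
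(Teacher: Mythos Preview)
Your proof is correct, but it takes a considerably longer route than the paper's. The paper's argument (after an implicit reduction to $a>0$ by multiplying by a root of $-1$) is a one-liner: since $\mathbb{Q}(a^{1/n})$ lies in a cyclotomic field it is Galois, and since $a>0$ it is contained in $\mathbb{R}$; hence every conjugate of $a^{1/n}$ is a \emph{real} root of $x^n-a$, of which there are only the two candidates $\pm a^{1/n}$. Thus the minimal polynomial has degree at most~$2$, while irreducibility of $x^n-a$ (same criterion you invoke) forces the degree to be exactly~$n$, so $n\le 2$.

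Your approach instead splits into the cases $n=p$ odd and $n=4$. The odd-prime case is handled by the clean observation that $\zeta_p\in\mathbb{Q}(a^{1/p})$ would force $p-1\mid p$. The $n=4$ case, however, requires you to identify the biquadratic field $\mathbb{Q}(i,\sqrt{|a|})$ and then do an explicit coordinate computation in $\mathbb{Q}(i)$ to exclude $\beta$ from it. This works, and it has the minor advantage of treating both signs of $a$ uniformly without a preliminary reduction, but the paper's realness trick dispatches the entire ``only if'' direction---including $n=4$ and all signs---in a single stroke, avoiding the case split and the fourth-power computation altogether.
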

\begin{proof}
If-part follows from above. For the other part, assume $a > 0$ and that $a^{1/n}$ belongs to a cyclotomic field. Now $\mathbb{Q}(a^{1/n}) \subset \mathbb{R}$ is Galois, so the conjugates of $a^{1/n}$ are real. As the conjugates of $a^{1/n}$ are roots of $x^n - a$, either the minimal polynomial of $a^{1/n}$ is $x - a^{1/n}$ or $(x - a^{1/n})(x + a^{1/n}) = x^2 - a^{2/n}$. In both cases $n \le 2$.
\end{proof}
\begin{corollary}
\label{cor:rootCyclotomicUni}
Let $a \in \mathbb{Q}, 0 \neq n \in \mathbb{Z}$ be such that $a^{1/n}$ belongs to a cyclotomic field. Then $|a|^2$ is a perfect $n$th power in $\mathbb{Q}$.
\end{corollary}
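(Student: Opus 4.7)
The plan is to reduce the corollary to the proof technique of the preceding proposition, which assumed $|a|$ was not a perfect power. Without loss of generality I take $n > 0$ (replacing $a$ by $a^{-1}$ otherwise), and let $\alpha$ denote the given value of $a^{1/n}$ lying in some cyclotomic field $\mathbb{Q}(\zeta_N)$. The idea is that although $\alpha$ may be a non-real complex number (for instance when $a<0$ and $n$ is even), one can extract a real cyclotomic element from it whose $n$th power is $|a|$.

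The key step is to show that the positive real $n$th root $|a|^{1/n}$ also lies in a cyclotomic field. This follows from the observation that $\alpha/|\alpha|$ is a $2n$th root of unity: indeed,
$$\left(\frac{\alpha}{|\alpha|}\right)^{2n} = \frac{\alpha^{2n}}{|\alpha|^{2n}} = \frac{a^2}{|a|^2} = 1.$$
Hence $|a|^{1/n} = |\alpha| = \alpha \cdot (\alpha/|\alpha|)^{-1}$ lies in $\mathbb{Q}(\zeta_N, \zeta_{2n}) \subseteq \mathbb{Q}(\zeta_{\operatorname{lcm}(N,2n)})$.

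To finish, I would mimic the previous proposition's proof applied to the positive real number $\beta := |a|^{1/n}$. Since $\mathbb{Q}(\beta) \subseteq \mathbb{R}$ is a subfield of a cyclotomic field, it is Galois over $\mathbb{Q}$, so all Galois conjugates of $\beta$ are real. As the conjugates of $\beta$ are roots of $x^n - |a|$, they must lie in $\{\pm\beta\}$, so the minimal polynomial of $\beta$ is either $x - \beta$ or $x^2 - \beta^2$. In either case $\beta^2 = |a|^{2/n} \in \mathbb{Q}$, and therefore $|a|^2 = (|a|^{2/n})^n$ is a perfect $n$th power in $\mathbb{Q}$. The only mild obstacle is the reduction to the real case when $\alpha$ is non-real, which is handled uniformly by the root-of-unity observation above.
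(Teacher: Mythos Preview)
Your proof is correct and follows essentially the same approach as the paper. The paper states the corollary without a separate proof, regarding it as an immediate consequence of the preceding proposition; your argument is precisely the natural way to make that deduction explicit: you reduce to the positive real case via the observation that $\alpha/|\alpha|$ is a $2n$th root of unity, and then rerun the proposition's ``real subfield of a cyclotomic field is Galois, hence conjugates lie in $\{\pm|a|^{1/n}\}$'' argument. This also cleanly handles the case $a<0$, which the proposition's written proof does not address explicitly.
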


We then provide a multivariable analogue to this corollary. Recall that non-zero rationals $a_1, \ldots , a_k$ are multiplicatively independent if the equation $a_1^{x_1} \cdots a_k^{x_k} = 1$ has only the solution $x_i = 0$.

\begin{proposition}
Let $a_1, \ldots, a_k$ be multiplicatively independent rationals. There exists an integer $N > 0$ with the following property: if $n, m_1, \ldots , m_k$ are integers such that $|a_1^{m_1} \cdots a_k^{m_k}|$ is a perfect $n$th power, then $n \mid m_iN$ for all $i$.
\end{proposition}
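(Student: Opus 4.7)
The plan is to reduce the statement to a question in linear algebra over $\mathbb{Z}$ via prime valuations. Let $S$ denote the finite set of primes $p$ with $v_p(a_i) \neq 0$ for some $i$, and form the $|S| \times k$ integer matrix $A$ whose entries are $A_{p,i} = v_p(a_i)$. Writing $m = (m_1, \ldots, m_k)^T$, the hypothesis that $|a_1^{m_1} \cdots a_k^{m_k}|$ is a perfect $n$th power is equivalent to $n \mid (Am)_p = \sum_i m_i v_p(a_i)$ for every $p \in S$.

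The first key step is to show that $A$ has rank $k$. Any $\mathbb{Q}$-linear dependence among the columns yields, after clearing denominators, integers $x_1, \ldots, x_k$ not all zero with $\sum_i x_i v_p(a_i) = 0$ for every prime $p$. This says $\prod_i |a_i|^{x_i} = 1$, so $\prod_i a_i^{x_i} = \pm 1$; squaring gives $\prod_i a_i^{2x_i} = 1$ with the exponents not all zero, contradicting multiplicative independence.

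Once full column rank of $A$ is established, I would pick any $k \times k$ submatrix $B$ of $A$ with nonzero determinant $D$ and set $N = |D|$. Restricting the divisibility hypothesis to the $k$ primes indexing the rows of $B$ gives $Bm \equiv 0 \pmod{n}$. Multiplying on the left by the adjugate and using $B^{\mathrm{adj}} B = D \cdot I$ yields $Dm \equiv 0 \pmod{n}$, so $n \mid N m_i$ for each $i$, as desired. A cleaner invariant choice is $N = \gcd$ of the nonzero $k \times k$ minors of $A$, which depends only on the $a_i$ and not on the choice of $B$.

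I do not expect a substantive obstacle here. The only subtle point is that the definition of multiplicative independence as given uses $\prod a_i^{x_i} = 1$ rather than $\prod a_i^{x_i} = \pm 1$; the squaring trick in the rank argument bridges this gap, at the mild cost of an extra factor of $2$ absorbed into $N$.
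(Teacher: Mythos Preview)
Your proof is correct and follows essentially the same approach as the paper: both form the integer matrix of prime valuations, use multiplicative independence to obtain full rank $k$, select an invertible $k\times k$ submatrix, and take $N$ to be an integer clearing the inverse. The only cosmetic differences are that the paper transposes your matrix and clears denominators in the solution of $\sum c_i w_i = e_j$ rather than invoking the adjugate; your explicit handling of the $\pm 1$ issue via squaring is a point the paper passes over with the phrase ``by the assumption.''
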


\begin{proof}
Let $p_1, \ldots , p_t$ be all of the primes which divide some numerator or denominator of $a_i$. Construct the matrix $A$ whose $i$th row $v_i$ consists of the exponents $v_{p_1}(|a_i|), \ldots , v_{p_t}(|a_i|)$ of the primes $p_1, \ldots , p_t$ in the prime factorization of $|a_i|$. By the assumption, the rows of $A$ are linearly independent. As the row rank equals the column rank, one may take some $k$ primes $q_1, \ldots , q_k \in \{p_1, \ldots , p_t\}$ such that the corresponding column vectors $w_1, \ldots , w_t$ are linearly independent.

By linear independence, let $c_1, \ldots , c_k \in \mathbb{Q}$ be such that
$$\sum_{i = 1}^k c_iw_i = (1, 0, 0, \ldots , 0).$$
Multiply by the product $N_1$ of the denominators of $c_i$ to get
$$\sum_{i = 1}^k C_iw_i = (N_1, 0, 0, \ldots , 0)$$
for $C_i \in \mathbb{Z}$.

Consider then the component
$$\sum_{j = 1}^k m_jv_{q_i}(|a_j|) \equiv 0 \pmod{n}.$$
of $q_i$ in the sum $\sum _iv_i$. Multiply by $C_i$ and sum over $i = 1, \ldots , k$. We get
\begin{align*}
0 \equiv \sum_{i = 1}^k C_i \sum_{j = 1}^k m_jv_{q_i}(|a_j|) \\
\equiv \sum_{j = 1}^k m_j \sum_{i = 1}^k C_iv_{q_i}(|a_j|) \\
\equiv m_1N_1 \pmod{n},
\end{align*}
the last equality following from the choice of $C_i$. Thus, $n \mid m_1N_1$.  Similar procedure for other indices gives the result.
\end{proof}

Combine this with Corollary \ref{cor:rootCyclotomicUni}:

\begin{proposition}
\label{prop:rootCyclotomicMulti}
Let $a_1, \ldots, a_k$ be multiplicatively independent rationals. There exists an integer $N > 0$ with the following property: if $n, m_1, \ldots , m_k$ are integers such that $a_1^{m_1/n} \cdots a_k^{m_k/n}$ belongs to some cyclotomic field, then $n \mid m_iN$ for  all $i$.
\end{proposition}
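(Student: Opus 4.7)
The plan is to bootstrap from the single-variable Corollary \ref{cor:rootCyclotomicUni} to the multivariable setting by invoking the immediately preceding proposition on powers. Given the hypotheses, set $\alpha = a_1^{m_1/n} \cdots a_k^{m_k/n}$ and $a = a_1^{m_1} \cdots a_k^{m_k}$, so that $\alpha$ is an $n$th root of $a$. By hypothesis $\alpha$ lies in some cyclotomic field, so Corollary \ref{cor:rootCyclotomicUni} (applied to the single rational $a$ and the $n$th root $\alpha$) tells us that $|a|^2$ is a perfect $n$th power in $\mathbb{Q}$.

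Now rewrite $|a|^2 = |a_1^{2m_1} \cdots a_k^{2m_k}|$. This is a perfect $n$th power, so the previous proposition (applied to the multiplicatively independent $a_1, \ldots , a_k$ with exponents $2m_1, \ldots , 2m_k$) yields an integer $N_0 > 0$, depending only on $a_1, \ldots , a_k$, such that $n \mid (2m_i)N_0$ for every $i$. Setting $N = 2N_0$ gives $n \mid m_i N$ for all $i$, as required.

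The only minor point to watch is that Corollary \ref{cor:rootCyclotomicUni} is phrased for a single $n$th root $a^{1/n}$ of a rational $a$, but the specific $n$th root of $a$ in question here, namely $\alpha$, is precisely of that form. No further subtlety arises, and since $N_0$ depends only on the fixed tuple $(a_1,\ldots,a_k)$, so does $N$. There is no real obstacle in this proof: the whole content sits in the two preceding results, and this proposition is just the packaging of their combination.
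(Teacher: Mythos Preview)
Your proof is correct and is exactly the combination the paper intends: apply Corollary \ref{cor:rootCyclotomicUni} to the single rational $a = a_1^{m_1}\cdots a_k^{m_k}$ to conclude $|a|^2$ is an $n$th power, then feed the exponents $2m_i$ into the preceding proposition and set $N = 2N_0$. The paper's proof consists of the single line ``Combine this with Corollary \ref{cor:rootCyclotomicUni}'', and you have spelled out precisely that combination.
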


We then get to the Kummer-type extensions. The main ingredient is the following lemma.

\begin{proposition}
\label{prop:garret}
Let $n$ be a positive integer and let $K$ be an extension of $\mathbb{Q}$ containing $\zeta_n$. Let $a_1, \ldots , a_k$ non-zero be elements of $K$. Assume that $a_i/a_j$ is not an $n$th power in $K$ for any $i \neq j$. Then the elements $\sqrt[n]{a_1}, \ldots , \sqrt[n]{a_k}$ are linearly independent over $K$.
\end{proposition}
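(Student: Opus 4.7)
The plan is to argue by contradiction using a minimal dependence relation together with the Galois action on the $n$th roots. Assume $\sqrt[n]{a_1}, \ldots, \sqrt[n]{a_k}$ are linearly dependent over $K$ and take a dependence
$$\sum_{i \in I} c_i \sqrt[n]{a_i} = 0$$
with all $c_i \in K^\times$ and $|I|$ as small as possible. If $|I| = 1$ the relation is impossible since $\sqrt[n]{a_i} \neq 0$. If $|I| = 2$, say $I = \{i, j\}$, then $\sqrt[n]{a_i}/\sqrt[n]{a_j} = -c_j/c_i \in K$, so $a_i/a_j = (-c_j/c_i)^n$ is an $n$th power in $K$, contradicting the hypothesis. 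So we may assume $|I| \ge 3$, and relabel so that $I = \{1, \ldots, m\}$ with $m \ge 3$.

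For the main case I would use the Galois action. Let $\sigma \in \Gal(\overline{K}/K)$ be arbitrary. Because $\zeta_n \in K$, each $\sqrt[n]{a_i}$ is sent by $\sigma$ to $\zeta_n^{e_i(\sigma)}\sqrt[n]{a_i}$ for some integer $e_i(\sigma)$. Applying $\sigma$ to the chosen relation yields
$$\sum_{i=1}^m c_i \zeta_n^{e_i(\sigma)} \sqrt[n]{a_i} = 0,$$
and subtracting $\zeta_n^{e_1(\sigma)}$ times the original relation gives
$$\sum_{i=2}^m c_i \bigl(\zeta_n^{e_i(\sigma)} - \zeta_n^{e_1(\sigma)}\bigr) \sqrt[n]{a_i} = 0,$$
a relation with at most $m - 1$ terms. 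By minimality of $|I|$, every coefficient must vanish, i.e.\ $\zeta_n^{e_i(\sigma)} = \zeta_n^{e_1(\sigma)}$ for all $i \in I$ and every $\sigma$.

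To conclude, observe that the condition $\zeta_n^{e_i(\sigma)} = \zeta_n^{e_1(\sigma)}$ for all $\sigma \in \Gal(\overline{K}/K)$ says exactly that $\sigma$ fixes $\sqrt[n]{a_i}/\sqrt[n]{a_1}$ for every $\sigma$. Since $K$ has characteristic zero, this forces $\sqrt[n]{a_i}/\sqrt[n]{a_1} \in K$, hence $a_i/a_1 = (\sqrt[n]{a_i}/\sqrt[n]{a_1})^n \in (K^\times)^n$ for $i \neq 1$, contradicting the assumption that no $a_i/a_j$ is an $n$th power in $K$. The only delicate point is making sure $\sigma$ genuinely acts on each $\sqrt[n]{a_i}$ by an $n$th root of unity (which uses $\zeta_n \in K$) and that the descent from Galois-invariance to membership in $K$ is legitimate (which is immediate in characteristic zero); beyond that the argument is a clean size-reduction on the dependence.
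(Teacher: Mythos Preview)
Your proof is correct and follows essentially the same route as the paper's: take a minimal nontrivial $K$-linear dependence among the $\sqrt[n]{a_i}$, apply a Galois automorphism, and subtract a suitable multiple of the original relation to obtain a strictly shorter one, forcing a ratio $\sqrt[n]{a_i}/\sqrt[n]{a_j}$ to lie in $K$. The only cosmetic difference is that the paper first selects a $\sigma$ moving $\alpha_i/\alpha_j$ and then exhibits a shorter nontrivial relation directly, whereas you quantify over all $\sigma$ and deduce Galois-invariance of the ratio; these are the same argument, and your separate treatment of $|I|\le 2$ is harmless but unnecessary.
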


The statement and proof have been given in \cite{Garrett}. Due to the importantness of the result we give the short proof here.

\begin{proof}
Let $L$ be a Galois extension of $K$ containing all of $\alpha_j = \sqrt[n]{a_j}$. Assume the contrary, and consider the shortest linear combination $$\sum c_j\alpha_j = 0,$$
where $0 \neq c_j \in K$. Clearly there is at least two summands. Let $i$ and $j$ be two indices in the sum. 

As $\alpha_i/\alpha_j \not\in K$ by assumption, there exists some $\sigma \in \Gal(L/K)$ such that $\sigma(\alpha_i/\alpha_j) \neq \alpha_i/\alpha_j$, so
$$\frac{\sigma(\alpha_i)}{\alpha_i} \neq \frac{\sigma(\alpha_j)}{\alpha_j}.$$
We now have
$$0 = \frac{\sigma(\alpha_i)}{\alpha_i} \cdot 0 - \sigma(0) = \frac{\sigma(\alpha_i)}{\alpha_i}\sum_{t} c_t\alpha_t - \sigma\left(\sum_t c_t \alpha_t\right) = \sum_t c_t \alpha_t \left(\frac{\sigma(\alpha_i)}{\alpha_i} - \frac{\sigma(\alpha_j)}{\alpha_j}\right).$$
The coefficient of $\alpha_i$ in this sum is $0$ while that of $\alpha_j$ is not zero, so we have obtained a shorter linear combination equal to zero. This contradiction proves the result.
\end{proof}

We now get our first main tool on Kummer-type extensions. This is a special case of the results in \cite{PeruccaSgobba}.

\begin{proposition}
\label{prop:kummerQ}
Let $a_1, \ldots , a_k$ be multiplicatively independent rationals. There exists a constant $C > 0$ such that for any $n, m_1, \ldots , m_k$, where $m_i \mid n$ for all $i$, one has
$$[\mathbb{Q}(\zeta_n, a_1^{1/m_1}, \ldots , a_k^{1/m_k}) : \mathbb{Q}] \ge C\phi(n)m_1 \cdots m_k.$$
Furthermore, there exist a positive integer $N$ such that for all $n, m_1, \ldots , m_k$, where $m_i \mid n$ for all $i$, one has
$$[\mathbb{Q}(\zeta_{Nn}, a_1^{1/Nm_1}, \ldots , a_k^{1/Nm_k}) : \mathbb{Q}(\zeta_N, a_1^{1/N}, \ldots , a_k^{1/N})] = \frac{\phi(Nn)}{\phi(N)}m_1 \cdots m_k.$$ 
\end{proposition}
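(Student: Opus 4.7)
My approach is Kummer theory over a cyclotomic base, with the ``defect'' controlled via Proposition~\ref{prop:rootCyclotomicMulti}. For the first claim, set $K = \mathbb{Q}(\zeta_n)$, $M = \operatorname{lcm}(m_1, \ldots, m_k)$, and $b_i = a_i^{M/m_i}$. Since $m_i \mid n$ we have $\mu_M \subset K$, and $\mathbb{Q}(\zeta_n, a_1^{1/m_1}, \ldots, a_k^{1/m_k}) = K(b_1^{1/M}, \ldots, b_k^{1/M})$. By Kummer theory, the degree of this extension over $K$ equals the order of the subgroup of $K^\times/(K^\times)^M$ generated by the $b_i$, namely $M^k/|S|$ where
\[
S = \Bigl\{(r_1, \ldots, r_k) \in (\mathbb{Z}/M)^k : \prod_i a_i^{r_i/m_i} \in K \Bigr\}.
\]
Proposition~\ref{prop:rootCyclotomicMulti} applied to each element of $S$ (after clearing denominators to $M$) forces $m_i \mid r_i N$ for every $i$, leaving at most $MN/m_i$ choices of $r_i$ per coordinate and hence $|S| \le M^k N^k/\prod_i m_i$. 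Multiplying by $\phi(n) = [K:\mathbb{Q}]$ gives the first claim with $C = 1/N^k$.

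For the second claim, the same setup applied to $F/\mathbb{Q}(\zeta_N)$ and $E/\mathbb{Q}(\zeta_{Nn})$ produces defects $S_1$ and $S$ with $[F:\mathbb{Q}(\zeta_N)] = N^k/|S_1|$ and $[E:\mathbb{Q}(\zeta_{Nn})] = (Nn)^k/|S|$, giving
\[
[E:F] = \frac{\phi(Nn)}{\phi(N)} \cdot \frac{n^k\,|S_1|}{|S|}.
\]
The claim thus reduces to the identity $|S| = |S_1| \prod_i(n/m_i)$. The inequality $\ge$ is immediate: each $(s_i) \in S_1$ lifts to $\prod_i(n/m_i)$ tuples $(r_i) = (m_i t_i)$ with $t_i \equiv s_i \pmod N$, and all of these lie in $S$. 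For the reverse inequality, given $(r_i) \in S$ Proposition~\ref{prop:rootCyclotomicMulti} forces $m_i \mid r_i$; writing $r_i = m_i t_i$, the element $\prod_i a_i^{t_i/N}$ belongs to $F \cap \mathbb{Q}(\zeta_{Nn})$, so it suffices to choose $N$ satisfying
\[
F \cap \mathbb{Q}(\zeta_{Nn}) = \mathbb{Q}(\zeta_N),
\]
which then places $(t_i \bmod N)$ in $S_1$.

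The main obstacle is establishing this last equality, equivalently that the maximal abelian subfield $F^{\textup{ab}}$ of $F$ over $\mathbb{Q}$ equals $\mathbb{Q}(\zeta_N)$. A description of $\Gal(F/\mathbb{Q})$ as a semidirect product $(\mathbb{Z}/N)^* \ltimes H$ shows that any abelian subfield of $F$ strictly larger than $\mathbb{Q}(\zeta_N)$ is generated by some $\alpha = \prod_i a_i^{c_i/N}$ lying in a cyclotomic field. By Corollary~\ref{cor:rootCyclotomicUni} and a sign analysis, every such $\alpha$ factors as $\sqrt{r}\cdot \zeta$ with $r$ in the multiplicative group $\langle |a_1|, \ldots, |a_k|\rangle \subset \mathbb{Q}_{>0}$ and $\zeta$ a root of unity of order dividing $2N$. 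Modulo squares there are only finitely many possibilities for $r$, independent of $N$, each with a fixed conductor depending only on $a_1, \ldots, a_k$. Choosing $N$ divisible by the constant from Proposition~\ref{prop:rootCyclotomicMulti}, by the least common multiple of these conductors, and by a sufficient power of $2$ to absorb the root-of-unity factor forces every such $\alpha$ into $\mathbb{Q}(\zeta_N)$, establishing $F^{\textup{ab}} = \mathbb{Q}(\zeta_N)$ and completing the proof.
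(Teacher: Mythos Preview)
Your first part is correct and essentially equivalent to the paper's: you phrase it via Kummer theory and the kernel $S$, while the paper uses Proposition~\ref{prop:garret} directly to exhibit $\prod m_i/N^k$ linearly independent monomials over $\mathbb{Q}(\zeta_n)$. Both arrive at $C = 1/N^k$ with $N$ the constant from Proposition~\ref{prop:rootCyclotomicMulti}.

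For the second part your route diverges substantially from the paper's. The paper gives a short non-constructive argument: try $N_0=1$; if it fails, there is some $N_1$ with the relative degree at most half the expected value; iterate. Collapsing the tower shows that after $t$ failures one has $[\mathbb{Q}(\zeta_{N_t},a_i^{1/N_t}):\mathbb{Q}]\le 2^{-t}\phi(N_t)N_t^k$, contradicting the first part for large $t$. No analysis of abelian subfields is needed; the first part alone forces termination.

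Your argument instead computes both relative degrees via Kummer kernels $S,S_1$ and reduces to the identity $|S|=|S_1|\prod(n/m_i)$, which you establish by showing $F\cap\mathbb{Q}(\zeta_{Nn})=\mathbb{Q}(\zeta_N)$, equivalently $F^{\textup{ab}}=\mathbb{Q}(\zeta_N)$. This is correct, and your sketch for it (monomials $\prod a_i^{c_i/N}$ lying in a cyclotomic field reduce via Corollary~\ref{cor:rootCyclotomicUni} to $\sqrt{r}\cdot\zeta$ with $r$ ranging over a fixed finite set modulo squares) is exactly the mechanism behind Proposition~\ref{prop:kummerAbelian}, proven later in the paper. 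One small point worth making explicit: to get $m_i\mid r_i$ from $Nm_i\mid r_iN_0$ you need $N_0\mid N$, which you do impose at the end; it would be cleaner to state this upfront. What you gain over the paper is an explicit description of a working $N$ (divisible by $N_0$, by the conductors of the relevant $\sqrt{r}$, and by a suitable power of $2$); what the paper gains is brevity, since its descent argument bypasses the abelian-subfield analysis entirely.
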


\begin{proof}
For the first part, let $N$ be as in Proposition \ref{prop:rootCyclotomicMulti}. By Proposition \ref{prop:garret}, the numbers of the form
$$a_1^{e_1/m_1} \cdots a_k^{e_k/m_k},$$
where the exponents $e_i$ range over $[0, m_i/N)$, are linearly independent over $\mathbb{Q}(\zeta_n)$, as the quotient of such numbers is not by the choice of $N$ contained in any cyclotomic field. Thus, we have
$$[\mathbb{Q}(\zeta_n, a_1^{1/m_1}, \ldots , a_k^{1/m_k}) : \mathbb{Q}] \ge \phi(n)\frac{m_1m_2 \cdots m_k}{N^k},$$
as desired.

For the second part, we first try to pick $N = N_0 = 1$. If this does not work, then we can take some integer $N_1$ divisible by $N_0$ such that
$$[\mathbb{Q}(\zeta_{N_1}, a_1^{1/N_1}, \ldots , a_k^{1/N_1}) : \mathbb{Q}(\zeta_{N_0}, a_1^{1/N_0}, \ldots , a_k^{1/N_0})] \le \frac{1}{2} \frac{\phi(N_1)}{\phi(N_0)}\left(\frac{N_1}{N_0}\right)^k.$$
If this $N_1$ does not work either, we may take some $N_2$ divisible by $N_1$ such that
$$[\mathbb{Q}(\zeta_{N_2}, a_1^{1/N_2}, \ldots , a_k^{1/N_2}) : \mathbb{Q}(\zeta_{N_1}, a_1^{1/N_1}, \ldots , a_k^{1/N_1})] \le \frac{1}{2} \frac{\phi(N_2)}{\phi(N_1)}\left(\frac{N_2}{N_1}\right)^k.$$
Continue in this manner. Assuming we can construct all of the numbers $N_0, N_1, \ldots , N_t$ for some $t$, by collapsing the tower of extensions with the tower law we get
$$[\mathbb{Q}(\zeta_{N_t}, a_1^{1/N_t}, \ldots , a_k^{1/N_t}) : \mathbb{Q}] \le \frac{1}{2^t}\phi(N_t)N_t^k.$$
By the first part of the proposition, one cannot pick $t$ arbitrarily large here, which proves the second part.
\end{proof}

The result immediatelly generalizes over any finite extension of $\mathbb{Q}$.
\begin{proposition}
\label{prop:kummerK}
Let $a_1, \ldots , a_k$ be multiplicatively independent rationals, and let $K$ be a finite extension of $\mathbb{Q}$. There exists a constant $C > 0$ such that for any $n, m_1, \ldots , m_k$, where $m_i \mid n$ for all $i$, one has
$$[K(\zeta_n, a_1^{1/m_1}, \ldots , a_k^{1/m_k}) : K] \ge C\phi(n)m_1 \cdots m_k.$$
Furthermore, there exist a positive integer $N$ such that for all $n, m_1, \ldots , m_k$, where $m_i \mid n$ for all $i$, one has
$$[K(\zeta_{Nn}, a_1^{1/Nm_1}, \ldots , a_k^{1/Nm_k}) : K(\zeta_N, a_1^{1/N}, \ldots , a_k^{1/N})] = \frac{\phi(Nn)}{\phi(N)}m_1 \cdots m_k.$$ 
\end{proposition}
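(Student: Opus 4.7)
My plan is to deduce the first inequality from Proposition~\ref{prop:kummerQ} by a standard tower-law argument, and then obtain the second part by running verbatim the iterative doubling scheme from the proof of Proposition~\ref{prop:kummerQ}, this time fed by the new first-part bound.

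For the first part, I would set $L := \mathbb{Q}(\zeta_n, a_1^{1/m_1}, \ldots, a_k^{1/m_k})$. Since $\zeta_n \in L$ and each $m_i$ divides $n$, the field $L$ contains all $m_i$-th roots of unity, and therefore all Galois conjugates of each $a_i^{1/m_i}$; thus $L/\mathbb{Q}$ is Galois. Consequently $KL/K$ is Galois and
$$[K(\zeta_n, a_1^{1/m_1}, \ldots, a_k^{1/m_k}) : K] \;=\; [KL : K] \;=\; [L : K \cap L] \;\ge\; \frac{[L : \mathbb{Q}]}{[K : \mathbb{Q}]},$$
and Proposition~\ref{prop:kummerQ} gives the desired bound with $C = C_0/[K : \mathbb{Q}]$, where $C_0$ is the constant in Proposition~\ref{prop:kummerQ}.

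For the second part, I would imitate the iteration in the proof of Proposition~\ref{prop:kummerQ} with $\mathbb{Q}$ replaced by $K$. Try $N_0 = 1$. If the desired equality fails for some tuple $(n, m_1, \ldots, m_k)$, then by the tower law (adjoining $a_i^{1/n}$ from $a_i^{1/m_i}$ one index at a time) it already fails with every $m_i$ replaced by $n$. Writing $N_1 = N_0 n$ and using that the relative degree
$$[K(\zeta_{N_1}, a_1^{1/N_1}, \ldots, a_k^{1/N_1}) : K(\zeta_{N_0}, a_1^{1/N_0}, \ldots, a_k^{1/N_0})]$$
divides its natural upper bound $(\phi(N_1)/\phi(N_0))(N_1/N_0)^k$, strict failure forces this degree to be at most half of the upper bound. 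Iterating produces $N_0 \mid N_1 \mid N_2 \mid \cdots$ with
$$[K(\zeta_{N_t}, a_1^{1/N_t}, \ldots, a_k^{1/N_t}) : K] \;\le\; 2^{-t}\,\phi(N_t)\,N_t^k$$
at step $t$, which for $t$ large contradicts the first part.

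The one subtlety to check is the divisibility claim used to convert strict inequality into a factor-of-two gain: a relative Kummer-type degree always divides the natural product $(\phi(M)/\phi(N))(M/N)^k$. This reduces by towering to the standard facts $[F(\zeta_{M}) : F(\zeta_{N})] \mid \phi(M)/\phi(N)$ and $[F(\alpha^{1/M}) : F(\alpha^{1/N})] \mid M/N$ for any characteristic-zero field $F$ containing the needed roots of unity. A natural alternative would be to redo the entire proof of Proposition~\ref{prop:kummerQ} directly over $K$, but this would require generalising Proposition~\ref{prop:rootCyclotomicMulti} to handle rationals that are already perfect powers in $K$; the compositum route sidesteps this difficulty.
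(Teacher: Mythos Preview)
Your proposal is correct and follows the same approach as the paper: the first part via the tower law (using that the Kummer field is Galois over $\mathbb{Q}$ and hence $[KL:K] \ge [L:\mathbb{Q}]/[K:\mathbb{Q}]$), and the second part by rerunning the doubling iteration of Proposition~\ref{prop:kummerQ} over $K$. Your write-up is in fact more detailed than the paper's, which merely says ``the proof of the second part is similar to that of Proposition~\ref{prop:kummerQ}''; in particular, your explicit justification of the divisibility needed for the factor-of-two drop is a useful addition.
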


\begin{proof}
The first result follows from Proposition \ref{prop:kummerQ} by the tower law. The proof of the second part is similar to that of Proposition \ref{prop:kummerQ}.
\end{proof}

As an important consequence of Proposition \ref{prop:kummerGalois} we get that a certain Galois group is the ``maximal possible''.
\begin{proposition}
\label{prop:kummerGalois}
Let $a_1, \ldots , a_k$ be multiplicatively independent rationals, and let $K$ be a finite Galois extension of $\mathbb{Q}$. There exists a positive integer $N$ with the follwing property:

For any integers $n, m_1, \ldots , m_k$, where $m_i \mid n$ for all $i$, and $x, x_1, \ldots , x_k$ with $(x, Nn) = 1, N \mid x-1, x_1, \ldots , x_k$ there exists an element of the Galois group of
$$K(\zeta_{Nn}, a_1^{1/Nm_1}, \ldots , a_k^{1/Nm_k})/K$$
sending
$$\zeta_{Nn} \to \zeta_{Nn}^{x}, a_i^{1/n} \to \zeta_{Nm_i}^{x_i}a_i^{1/n}.$$
\end{proposition}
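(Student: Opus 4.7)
The plan is to derive this from the second part of Proposition \ref{prop:kummerK} by a counting argument. First I would take $N$ as provided by that proposition and consider the tower
$$K \subset L_0 := K(\zeta_N, a_1^{1/N}, \ldots, a_k^{1/N}) \subset L := K(\zeta_{Nn}, a_1^{1/Nm_1}, \ldots, a_k^{1/Nm_k}).$$
Any $\sigma \in \Gal(L/K)$ is determined by its action on the generators and so associates to a tuple $(y, y_1, \ldots, y_k) \in G := (\mathbb{Z}/Nn\mathbb{Z})^\times \times \prod_{i=1}^k \mathbb{Z}/Nm_i\mathbb{Z}$ via $\sigma(\zeta_{Nn}) = \zeta_{Nn}^y$ and $\sigma(a_i^{1/Nm_i}) = \zeta_{Nm_i}^{y_i} a_i^{1/Nm_i}$. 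This yields a canonical injection $\Gal(L/K) \hookrightarrow G$.

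Next I would identify which tuples come from elements of the subgroup $\Gal(L/L_0) \subset \Gal(L/K)$. Using the identities $\zeta_N = \zeta_{Nn}^n$ and $a_i^{1/N} = (a_i^{1/Nm_i})^{m_i}$, a short calculation shows that $\sigma$ fixes $\zeta_N$ exactly when $y \equiv 1 \pmod{N}$ and fixes $a_i^{1/N}$ exactly when $y_i \equiv 0 \pmod{N}$. Hence $\Gal(L/L_0)$ maps into the subgroup
$$H := \{(y, y_1, \ldots, y_k) \in G : y \equiv 1 \pmod{N}, \ y_i \equiv 0 \pmod{N} \text{ for all } i\}.$$

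Finally, I would match cardinalities: by counting kernels of the obvious reduction maps one obtains $|H| = \frac{\phi(Nn)}{\phi(N)} m_1 \cdots m_k$, which by Proposition \ref{prop:kummerK} equals $[L : L_0] = |\Gal(L/L_0)|$. Thus the injection of $\Gal(L/L_0)$ into $H$ is surjective, and every admissible tuple $(x, x_1, \ldots, x_k)$ from the statement — which by the congruence hypotheses lies in $H$ — is realised by an element of $\Gal(L/L_0) \subset \Gal(L/K)$, as required. The only serious step of the whole argument is the maximality of the degree guaranteed by Proposition \ref{prop:kummerK}; once that is in hand, the remaining work is the elementary identification of the image of $\Gal(L/L_0)$ with $H$ together with the order count.
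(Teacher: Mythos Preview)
Your argument is correct and is exactly the approach the paper intends: the paper states Proposition~\ref{prop:kummerGalois} as an immediate consequence of Proposition~\ref{prop:kummerK} (the reference in the preceding sentence is a typo) without spelling out a proof, and what you have written is precisely the natural counting argument that fills in those details. The identification of $\Gal(L/L_0)$ with the subgroup $H$ via the degree equality from Proposition~\ref{prop:kummerK} is the whole content, and you have carried it out cleanly.
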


Here is another consequence of the results.
\begin{proposition}
\label{prop:kummerDisjoint}
Let $a_1, \ldots , a_k$ be multiplicatively independent rationals, and let $K$ be a finite Galois extension of $\mathbb{Q}$. There exists an integer $N$ such that for any $n, n', m_1, \ldots m_k$, where $(n, Nn') = 1$ and $m_i \mid n$ for all $i$, the fields
$$\mathbb{Q}(\zeta_n, a_1^{1/m_1}, \ldots , a_k^{1/m_k})$$
and
$$K(\zeta_{Nn'}, a_1^{1/Nn'}, \ldots , a_k^{1/Nn'}).$$
are linearly disjoint and the former extension has degree $\phi(n)m_1 \cdots m_k$.
\end{proposition}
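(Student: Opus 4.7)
The plan is to identify the compositum $F_1F_2$ explicitly as a single Kummer-type extension and then apply Proposition~\ref{prop:kummerK} twice to read off the required degree. Choose $N$ to be a common multiple of the integers supplied by Proposition~\ref{prop:kummerK} (applied to the field $K$) and by Proposition~\ref{prop:kummerQ} (applied to $\mathbb{Q}$); passing to multiples of a working $N$ preserves both degree equalities, since dividing the relation at level $cN$ by the one at level $N$ recovers the original formula. Write $F_1 = \mathbb{Q}(\zeta_n, a_i^{1/m_i})$ and $F_2 = K(\zeta_{Nn'}, a_i^{1/Nn'})$. The hypothesis $\gcd(n, Nn') = 1$ combined with $m_i \mid n$ yields $\gcd(m_i, Nn') = 1$, so by Bezout one can express $a_i^{1/Nn'm_i}$ as a product of integer powers of $a_i^{1/m_i}$ and $a_i^{1/Nn'}$, and the same coprimality shows $\zeta_{Nnn'}$ lies in the compositum. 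With the reverse inclusions being immediate (take $Nn'$-th powers to recover $a_i^{1/m_i}$ and $m_i$-th powers to recover $a_i^{1/Nn'}$), this identifies
$$F_1F_2 = K\bigl(\zeta_{Nnn'},\, a_1^{1/Nn'm_1}, \ldots, a_k^{1/Nn'm_k}\bigr).$$

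Now apply Proposition~\ref{prop:kummerK} with the chosen $N$ in two ways: first to the tuple $(nn';\, n'm_1,\ldots,n'm_k)$ (valid since $n'm_i \mid nn'$), and then to $(n';\, n',\ldots,n')$. Dividing the resulting degree formulas via the tower law gives
$$[F_1F_2 : F_2] \,=\, \frac{\phi(Nnn')}{\phi(Nn')}\, m_1\cdots m_k \,=\, \phi(n)\, m_1\cdots m_k,$$
the last equality using $\gcd(n,Nn')=1$. A parallel argument over $\mathbb{Q}$ computes $[F_1:\mathbb{Q}]$: the coprimality $\gcd(n,N)=1$ identifies $\mathbb{Q}(\zeta_{Nn}, a_i^{1/Nm_i}) = \mathbb{Q}(\zeta_N, a_i^{1/N})\cdot F_1$ by the same Bezout trick, and dividing the Proposition~\ref{prop:kummerQ} equalities at levels $(Nn; Nm_i)$ and $(N; N)$ forces $[F_1:\mathbb{Q}] \geq \phi(n) m_1\cdots m_k$. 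Since $F_1$ is generated over $\mathbb{Q}$ by $\zeta_n$ and the $a_i^{1/m_i}$, the reverse bound is automatic, giving $[F_1:\mathbb{Q}] = \phi(n) m_1\cdots m_k$.

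Combining the two equalities yields $[F_1F_2 : F_2] = [F_1 : \mathbb{Q}]$, which is one of the equivalent formulations of linear disjointness, and simultaneously produces the claimed degree for $F_1$. The only nontrivial step is the explicit identification of $F_1F_2$ as a Kummer-type extension, since without that one cannot feed the situation into Proposition~\ref{prop:kummerK}; this is where the coprimality hypothesis $\gcd(n,Nn')=1$ does all of the work by turning two separately-taken radicals of $a_i$ into a single $Nn'm_i$-th radical. Everything after that identification is bookkeeping with the tower law.
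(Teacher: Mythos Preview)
Your argument is correct. The paper does not give a proof of this proposition at all; it simply states it as ``another consequence of the results,'' meaning Propositions~\ref{prop:kummerQ} and~\ref{prop:kummerK}. Your proof carries out precisely that derivation: identify the compositum as a single Kummer-type extension via the coprimality hypothesis, and then read off both degrees from the maximal-degree formulas already established. This is the intended route, executed cleanly; the observation that one may pass to a common multiple of the two $N$'s (one from Proposition~\ref{prop:kummerQ}, one from Proposition~\ref{prop:kummerK}) by dividing the degree identity at level $cN$ by the one at level $N$ is a nice detail that the paper leaves implicit.
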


We conclude this section with a result on the maximal abelian extension of a Kummer-type extension.

\begin{proposition}
\label{prop:kummerAbelian}
Let $a_1, a_2, \ldots , a_k$ be arbitrary rationals. There exists rationals $b_1, \ldots , b_K$ such that for any $n, m_1, m_2, \ldots , m_k$, where $m_i \mid n$ for all $i$, the largest abelian subfield of
$$\mathbb{Q}(\zeta_n, a_1^{1/m_1}, \ldots , a_k^{1/m_k})$$
is a subfield of
$$\mathbb{Q}(\zeta_n, \sqrt{b_1}, , \ldots , \sqrt{b_K}).$$
\end{proposition}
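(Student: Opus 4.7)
The plan is to analyze the commutator subgroup of the Galois group of a large Kummer--cyclotomic extension and then descend.

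First, I would reduce to the multiplicatively independent case. Choose positive rationals $\beta_1, \ldots, \beta_r$ forming a multiplicative basis of $\langle |a_1|, \ldots, |a_k|\rangle \leq \mathbb{Q}_{>0}^{\times}$, so $a_i = \epsilon_i \prod_j \beta_j^{e_{ij}}$ with $\epsilon_i \in \{\pm 1\}$. Each $a_i^{1/m_i}$ is then a $2m_i$-th root of unity times $\prod_j \beta_j^{e_{ij}/m_i}$; using $m_i \mid n$ gives $F \subseteq \widetilde{F} := \mathbb{Q}(\zeta_{2n}, \beta_1^{1/n}, \ldots, \beta_r^{1/n})$. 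My candidate list is $\{b_1, \ldots, b_K\} := \{-1, \beta_1, \ldots, \beta_r\}$, a finite set depending only on the $a_i$.

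Next I would perform the key commutator computation on an enlarged field. Apply Proposition \ref{prop:kummerGalois} to the $\beta_j$'s to obtain $N$ such that every automorphism described there lies in $G := \Gal(\widetilde{F}'/\mathbb{Q})$ with $\widetilde{F}' := \mathbb{Q}(\zeta_{2Nn}, \beta_j^{1/Nn})$. Parameterize an element of $G$ by a pair $(x, (x_j))$ via $\zeta_{2Nn} \mapsto \zeta_{2Nn}^x$ and $\beta_j^{1/Nn} \mapsto \zeta_{Nn}^{x_j} \beta_j^{1/Nn}$; the group law is $(x, (x_j))(y, (y_j)) = (xy, (xy_j + x_j))$, from which a direct calculation gives $[(x, (x_j)), (y, (y_j))] = (1, ((x-1)y_j - (y-1)x_j))$. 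Since elements of $(\mathbb{Z}/2Nn)^{\times}$ are odd, $x-1$ is even, and as $x, y$ and $x_j, y_j$ vary over admissible residues, the commutator subgroup exhausts the ``even'' sublattice of the Kummer part (using the freedom supplied by Proposition \ref{prop:kummerGalois}). Hence the maximal abelian quotient of $G$ has Kummer exponent dividing $2$, so the maximal abelian subfield of $\widetilde{F}'/\mathbb{Q}$ lies in $\mathbb{Q}(\zeta_{2Nn}, \sqrt{\beta_1}, \ldots, \sqrt{\beta_r})$.

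Finally, I would descend to $F$. The maximal abelian subfield $L$ of $F/\mathbb{Q}$ satisfies $L \subseteq F \cap \mathbb{Q}(\zeta_{2Nn}, \sqrt{\beta_1}, \ldots, \sqrt{\beta_r})$. The roots of unity in $F$ beyond $\mathbb{Q}(\zeta_n)$ arise only from the $\zeta_{2m_i}$'s contributed by the signs $\epsilon_i^{1/m_i}$ and so lie in $\mathbb{Q}(\zeta_{2n})$; the extension $\mathbb{Q}(\zeta_{2n})/\mathbb{Q}(\zeta_n)$ is at most quadratic, and its quadratic subfields are generated by $\sqrt{-1}$ or by $\sqrt{\pm 2}$ (the latter arising from $\beta_j = 2$ if that prime occurs in the basis, and otherwise from $\sqrt{-1}$). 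Combined with the exponent-$2$ analysis of $L$ modulo $\mathbb{Q}(\zeta_{2n})$, this yields $L \subseteq \mathbb{Q}(\zeta_n, \sqrt{-1}, \sqrt{\beta_1}, \ldots, \sqrt{\beta_r})$. The hardest part will be this final descent, particularly when $n$ has large $2$-adic valuation; here I would invoke Proposition \ref{prop:kummerDisjoint} to preclude any unexpected abelian enlargements of $\mathbb{Q}(\zeta_n)$ inside $F$.
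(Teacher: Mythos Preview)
Your choice of $b_j$ is wrong, and a single example shows it. Take $k=1$, $a_1 = 4$, $n = m_1 = 4$. Then $F = \mathbb{Q}(\zeta_4, 4^{1/4}) = \mathbb{Q}(i, \sqrt{2}) = \mathbb{Q}(\zeta_8)$ is abelian of degree $4$. Your multiplicative basis for $\langle |a_1| \rangle = \langle 4 \rangle$ is $\beta_1 = 4$, so your claimed container is $\mathbb{Q}(\zeta_4, \sqrt{-1}, \sqrt{4}) = \mathbb{Q}(i)$, which does not contain $F$. The problem is that a basis of $\langle |a_i| \rangle$ does not see perfect-power relations: the equality $4 = 2^2$ drags $\sqrt{2}$ into $F$, yet $\sqrt{\beta_1} = 2$ is rational and contributes nothing. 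Your descent paragraph compounds this: you assert that the only roots of unity in $F$ beyond $\mu_n$ come from the signs $\epsilon_i^{1/m_i}$, but in the example $\epsilon_1 = 1$ and still $\zeta_8 \in F$. The paper sidesteps this by taking the $b_j$ to be the \emph{primes} dividing the numerators and denominators of the $a_i$; then every monomial $a_1^{e_1/m_1}\cdots a_k^{e_k/m_k}$ is a root of unity times $\prod b_j^{f_j/m}$ with the $b_j$ multiplicatively independent, and Corollary~\ref{cor:rootCyclotomicUni} tells you exactly when such an element is cyclotomic.

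Your commutator computation also does not deliver what you claim. From Proposition~\ref{prop:kummerGalois} you only know that $G$ contains the elements $(x,(x_j))$ with $N \mid x-1$ and $N \mid x_j$; commutators of two such elements have Kummer part divisible by $N^2$, not merely by $2$. Using instead that $G$ surjects onto $(\mathbb{Z}/2Nn)^\times$ together with $K \supseteq (N\mathbb{Z}/Nn\mathbb{Z})^r$, the subgroup $[G,K]$ still only reaches $(1,(2N z_j))$, which bounds the abelian subfield by $\mathbb{Q}(\zeta_{2Nn}, \beta_j^{1/2N})$ rather than $\mathbb{Q}(\zeta_{2Nn}, \sqrt{\beta_j})$. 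Repairing this would require exploiting the full structure of $G$, not just the slice supplied by Proposition~\ref{prop:kummerGalois}.

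The paper's argument avoids commutators entirely. By Kronecker--Weber, any abelian overfield of $B = \mathbb{Q}(\zeta_n,\sqrt{b_j})$ sits in a cyclotomic field $A$; one then shows $AB \cap B(a_i^{1/m_i}) = B$ by picking a monomial $B$-basis of the Kummer side and checking via Proposition~\ref{prop:garret} that no nontrivial quotient of basis elements lies in $AB$. That last step reduces to Corollary~\ref{cor:rootCyclotomicUni}, and the whole proof is a few lines.
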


\begin{proof}
We choose the numbers $b_1, \ldots , b_K$ to consist of all of the primes which divide some numerator or denominator of $a_1, \ldots, a_k$. Now it suffices to check that any extension $A/B$, where $A$ is an abelian number field and 
$$B = \mathbb{Q}(\zeta_n, \sqrt{b_1}, \ldots , \sqrt{b_K}),$$
is linearly disjoint with the extension $C/B$, where
$$C = B(\zeta_n, a_1^{1/m_1}, \ldots , a_k^{1/m_k}).$$
As any abelian number field $A$ is a subfield of a cyclotomic field, it suffices to do this for $A$ cyclotomic.

We prove linear disjointness by proving that a certain $B$-basis of $C$ is linearly independent over $AB$, too. Since the products of the form
$$a_1^{e_1/m_1} \cdots a_k^{e_k/m_k}, 0 \le e_i < m_i$$
span $C$ as a $B$-vector space, we may pick some subset of them, say $S$, which forms a $B$-basis for $C$.  Now the quotient of any two elements of $S$ does not belong to $B$. By X.Y, it suffices to prove that no quotient of two elements of $S$ belongs to $AB$.

Assume $q = a_1^{e_1/m_1} \cdots a_k^{e_k/m_k}$ is a quotient of two elements of $S$ belonging to $AB$. Write $q$ as
$$b_1^{f_1/m}b_2^{f_2/m} \cdots b_K^{f_K/m}, f_i \in \mathbb{Z}$$
up to a multiplication of a root of unity which is contained in $B$. Such a number is contained in a cyclotomic field if and only if $m \mid 2f_i$ for all $i$ by Corollary \ref{cor:rootCyclotomicUni}. In this case, $q$ is in fact contained in $B$.
\end{proof}

\section{Generalization for several groups}

The notation is similar to that of Lenstra. Let $W_1, \ldots , W_t$ be infinite, finitely generated subgroups of $\mathbb{Q}^{\times}$. Let $k_1, \ldots , k_t$ be positive integers, $F$ a finite Galois extension of $\mathbb{Q}$, and $C$ a conjugacy class of $\Gal(F/\mathbb{Q})$. We are interested in the set $M$ 
of primes $p$ for which the index of the reduction of $W_i$ modulo $p$ divides $k_i$ for all $i$, and for which $(p|F) \in C$. The letters $p$ and $\ell$ always denote prime numbers.

For each $1 \le i \le t$, let $q_i(\ell)$ be the smallest power of $\ell$ not dividing $k_i$. For each $\ell$ define
$$L_{\ell} = \mathbb{Q}(\zeta_{\max(q_1(\ell), \ldots , q_t(\ell))}, W_1^{1/q_1(\ell)}, \ldots , W_t^{1/q_t(\ell)}).$$
Let $L_n$ be the compositum of $L_{\ell}, \ell | n$. Let $C_n$ be the set of $\sigma \in \Gal(FL_n/\mathbb{Q})$ such that $(\sigma|F) \in C$ and such that $\sigma$ is not the identity on $\mathbb{Q}(\zeta_{q_i(\ell)}, W_i^{1/q_i(\ell)})$ for any $1 \le i \le t$ and $\ell | n$. Let $d_n = \frac{|C_n|}{|\Gal(FL_n/\mathbb{Q})|}$. As with the case $t = 1$ of one group, we have $d_n \ge d_m \ge 0$ for all $n | m$, and therefore the limit $d_{\infty}$ of $d_n$ exists when $n$ goes through the squarefree positive integers ordered by divisibility.

In the next two subsections we prove the following theorems.

\begin{theorem}
\label{thm:densitylimit}
Assume GRH. The density $d(M)$ of $M$ exists, and we have $d(M) = d_{\infty}$.
\end{theorem}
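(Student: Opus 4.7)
My plan is to follow Lenstra's strategy --- a Hooley--Cooke--Weinberger type argument conditional on GRH --- and adapt it to the multi-group setting. Write $K_{i,\ell} = \mathbb{Q}(\zeta_{q_i(\ell)}, W_i^{1/q_i(\ell)})$, so that $L_\ell$ is the compositum of $K_{1,\ell}, \ldots, K_{t,\ell}$. For all but finitely many primes $p$, the event $p \in M$ is equivalent to $(p|F) \in C$ together with $p$ not splitting completely in any $K_{i,\ell}$: indeed $q_i(\ell)$ divides the index of $(W_i \bmod p)$ in $(\mathbb{Z}/p\mathbb{Z})^\times$ precisely when $p$ splits completely in $K_{i,\ell}$, and ``the index divides $k_i$'' is exactly the conjunction of $q_i(\ell) \nmid \text{index}$ over all $\ell$. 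The ramified primes form a finite set and contribute nothing to the density.

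Introduce a cut-off $y$ and set
$$N(x) = \#\{p \le x : p \in M\},\quad N_y(x) = \#\{p \le x : (p|F) \in C,\ p \text{ does not split completely in any } K_{i,\ell},\, \ell \le y\}.$$
Then $N(x) \le N_y(x)$, with
$$0 \le N_y(x) - N(x) \le \sum_{i=1}^{t}\sum_{\ell > y}\#\{p \le x : p \text{ splits completely in } K_{i,\ell}\} =: T(y, x).$$
For each fixed $y$, the effective Chebotarev theorem under GRH applied to the fixed Galois extension $FL_{P(y)}/\mathbb{Q}$, with $P(y) = \prod_{\ell \le y} \ell$, yields $N_y(x) = d_{P(y)}\pi(x) + O_y(x^{1/2}\log x)$. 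Since $d_{P(y)} \to d_\infty$ as $y \to \infty$ by definition of the limit, proving the theorem reduces to showing $T(y, x)/\pi(x) \to 0$ as $y \to \infty$ uniformly in large $x$.

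To bound $T(y,x)$, I split the range $\ell > y$ into three Hooley-style zones. In the small zone $y < \ell \le \tfrac{1}{6}\log x$, GRH-Chebotarev applied directly to each $K_{i,\ell}/\mathbb{Q}$ gives a count of order $\pi(x)/[K_{i,\ell}:\mathbb{Q}] + O(x^{1/2}\log(x[K_{i,\ell}:\mathbb{Q}]))$, and by Proposition~\ref{prop:kummerK} the degree is $\gg \phi(q_i(\ell))\,q_i(\ell)^{r_i} \gg \ell^2$ (since each $W_i$ is infinite, its free rank $r_i$ is at least $1$); summing over $\ell > y$ contributes $O(\pi(x)/y)$, with the GRH error terms totalling $o(\pi(x))$. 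In the middle zone $\tfrac{1}{6}\log x < \ell \le \sqrt{x}/\log^2 x$, the GRH error dominates the main term, so I would replace the splitting condition by the weaker one $p \equiv 1 \pmod{q_i(\ell)}$ and invoke Brun--Titchmarsh, using that $q_i(\ell) = \ell$ for all but finitely many $\ell$. In the large zone $\ell > \sqrt{x}/\log^2 x$, one applies the elementary sieve argument of Hooley: the number of $p \le x$ for which some generator of $W_i$ is an $\ell$-th power residue mod $p$ is $o(\pi(x))$, which is exactly the endpoint of the Cooke--Weinberger single-group proof.

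The main obstacle is the large-$\ell$ regime, which relies entirely on elementary methods rather than GRH. The crucial structural input making everything go through is the lower bound $[K_{i,\ell}:\mathbb{Q}] \gg \ell^{1 + r_i}$ of Proposition~\ref{prop:kummerK}, which, being genuinely stronger than the trivial $\gg \ell$, is what makes the small-zone sum convergent. The multi-group case then decouples into $t$ single-group tail bounds via the union bound defining $T(y, x)$, each of which is a direct invocation of the Cooke--Weinberger argument, combined with one effective Chebotarev computation on the single field $FL_{P(y)}$ that handles the main term.
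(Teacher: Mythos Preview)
Your overall strategy is the paper's Hooley-type argument, and your handling of the main term via a fixed cutoff $y$ and a single application of Chebotarev to $FL_{P(y)}$ is a legitimate (and slightly cleaner) variant of what the paper does: the paper first reduces to $F=\mathbb{Q}$ via Lenstra's Lemma 3.2 and then runs an inclusion--exclusion over $2^{\pi(\xi_1)+t}$ subfields with a moving cutoff $\xi_1=\tfrac{1}{6}\log x$, whereas your fixed-$y$ double-limit device avoids both steps.

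However, your zone boundaries for the tail $T(y,x)$ are misplaced, and as written two of the three tail estimates fail. In your middle zone $\tfrac{1}{6}\log x<\ell\le \sqrt{x}/\log^2 x$, Brun--Titchmarsh gives a contribution $\ll \dfrac{x}{\log x}\sum_{\ell}\dfrac{1}{\ell}$, and by Mertens the prime sum over this range is $\sim \log\log x$, so the total is $\gg \pi(x)\log\log x$, not $o(\pi(x))$. In your large zone $\ell>\sqrt{x}/\log^2 x$, Hooley's elementary argument bounds $\sum_p\log p$ by $O((x/z)^2)$ with $z$ the lower endpoint; taking $z=\sqrt{x}/\log^2 x$ yields $x\log^4 x$, which is useless.

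The fix is to shift the cutoffs exactly as the paper does: apply GRH to the individual $K_{i,\ell}$ all the way up to $\xi_2=\sqrt{x}/\log^2 x$ (the accumulated GRH error is then $O(\pi(\xi_2)\,x^{1/2}\log x)=O(x/\log^2 x)$, still acceptable), use Brun--Titchmarsh only in the narrow window $\xi_2<\ell\le \xi_3=\sqrt{x}\log x$ (where now $\sum_\ell 1/\ell=o(1)$), and reserve the elementary argument for $\ell>\xi_3$ (where $(x/\xi_3)^2=x/\log^2 x$). The threshold $\tfrac{1}{6}\log x$ has no role in your setup: in Hooley's and the paper's argument it is the cutoff for the inclusion--exclusion that produces the \emph{main term}, not a tail cutoff, and since your main term comes from a fixed $y$ it should simply be dropped.
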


\begin{theorem}
\label{thm:densitynonzero}
If $d_n \neq 0$ for all squarefree positive integers $n$, then $d_{\infty} \neq 0$.
\end{theorem}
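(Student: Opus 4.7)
The strategy is to mirror Lenstra's single-group reduction, with Propositions \ref{prop:kummerK}--\ref{prop:kummerDisjoint} (supplemented by Proposition \ref{prop:kummerAbelian}) providing the Kummer-theoretic input needed to run it for several subgroups simultaneously. Set $E_{i,\ell} := \mathbb{Q}(\zeta_{q_i(\ell)}, W_i^{1/q_i(\ell)})$, so that $L_\ell$ is the compositum of $E_{1,\ell}, \ldots, E_{t,\ell}$. Since each $W_i$ is infinite I pick $a_i \in W_i$ that is not a root of unity; for all sufficiently large $\ell$ we have $q_i(\ell) = \ell$ and $E_{i,\ell} \supseteq \mathbb{Q}(\zeta_\ell, a_i^{1/\ell})$, so Proposition \ref{prop:kummerK} yields a constant $c > 0$ with $[E_{i,\ell}:\mathbb{Q}] \ge c\,\ell(\ell-1)$ for every $i$ and every large enough $\ell$.

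Next, I invoke Proposition \ref{prop:kummerDisjoint} applied to a multiplicatively independent basis of $W_1 \cdots W_t$ and combine it with Proposition \ref{prop:kummerAbelian} to control the abelian part of $F$, extracting a squarefree integer $m$ (a product of primes up to some cutoff $c_0$) such that for every prime $\ell > c_0$ and every squarefree $n$ with $\ell \nmid n$ the fields $L_\ell$ and $FL_n$ are linearly disjoint over $\mathbb{Q}$. This identifies $\Gal(FL_{n\ell}/\mathbb{Q})$ with $\Gal(FL_n/\mathbb{Q}) \times \Gal(L_\ell/\mathbb{Q})$, and a direct count from the definition of $C_{n\ell}$ gives the product formula
$$d_{n\ell} \;=\; d_n \cdot \frac{\#\{\sigma \in \Gal(L_\ell/\mathbb{Q}) : \sigma|_{E_{i,\ell}} \ne \id \text{ for all } i\}}{[L_\ell : \mathbb{Q}]}.$$
A union bound over the $t$ bad events, combined with the degree lower bound from the previous paragraph, gives
$$1 - \frac{d_{n\ell}}{d_n} \;\le\; \sum_{i=1}^{t} \frac{1}{[E_{i,\ell}:\mathbb{Q}]} \;\le\; \frac{C'}{\ell^2}$$
for some $C' > 0$ independent of $n$ and all $\ell > c_0$.

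Every squarefree $n$ factors uniquely as $n = m_1 m_2$ with $m_1 \mid m$ and $(m_2, m) = 1$; iterating the product formula one large prime at a time gives $d_n = d_{m_1} \prod_{\ell \mid m_2} (1 - \eps_\ell)$ with $0 \le \eps_\ell \le C'/\ell^2$, and therefore
$$d_n \;\ge\; \Bigl(\min_{m' \mid m} d_{m'}\Bigr) \prod_{\ell > c_0} \bigl(1 - C'/\ell^2\bigr) \;=:\; \delta.$$
By hypothesis each $d_{m'}$ is positive (there are only finitely many divisors $m' \mid m$), and the infinite product converges to a strictly positive constant because $\sum_\ell 1/\ell^2 < \infty$; hence $\delta > 0$, and passing to the limit in the divisibility order yields $d_\infty \ge \delta > 0$.

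The main difficulty is the linear disjointness step. Lenstra's one-group argument already requires care to separate the Kummer radicals from the abelian constituents of $F$; here one must handle the $t$ subgroups $W_i$ simultaneously, and verify that a single conductor $m$ coming from Propositions \ref{prop:kummerDisjoint} and \ref{prop:kummerAbelian} suffices for the whole compositum $L_\ell$ rather than just individual $E_{i,\ell}$. Once that is in place, the remaining steps (deriving the product formula from the Galois decomposition, the union bound, and the convergence of $\prod(1 - C'/\ell^2)$) are essentially bookkeeping.
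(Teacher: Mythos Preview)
Your plan is correct and mirrors the paper's proof almost exactly: the paper also takes a multiplicatively independent basis of the group $W$ generated by all $W_i$, uses Proposition~\ref{prop:kummerDisjoint} (equivalently Lenstra's Lemma~5.6) to get $L_\ell \cap FL_n = \mathbb{Q}$ for all large $\ell \nmid n$, derives the product formula $d_{n\ell} = d_n d_\ell'$, and then bounds $d_\ell' \ge 1 - t/\ell^2$ via the same union bound over the subfields $E_{i,\ell}$ to conclude that the infinite product is positive. The only cosmetic differences are that the paper computes $[L_\ell:\mathbb{Q}] = (\ell-1)\ell^r$ exactly (rather than working with your constant $C'$) and that Proposition~\ref{prop:kummerDisjoint} already absorbs the general Galois field $F$, so your separate appeal to Proposition~\ref{prop:kummerAbelian} is unnecessary.
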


\subsection{Proof of Theorem \ref{thm:densitylimit}: $d(M) = d_{\infty}$}

As in Lenstra's work (\cite{Lenstra}, Lemma 3.2), we may reduce to the case $F = \mathbb{Q}$ and $C = \{\id_{\mathbb{Q}}\}$.

The proof is a modififcation to Hooley's \cite{Hooley} proof for Artin's primitive root conjecture. The works of Cooke and Weinberger \cite{Cooke} and Matthews \cite{Matthews} have also provided inspiration.

Let $R(\ell, p)$ be the statement ``$p$ splits in $\mathbb{Q}(\zeta_{q_i(\ell)}, W_i^{1/q_i(\ell)})$ for at least one index $1 \le i \le t$''. Let $N(x, \delta)$ be the number of $p \le x$ such that $R(\ell, p)$ is false for all $\ell \le \delta$. We want to prove that $N(x, x-1) = |M \cap [1, x]|$ tends to infinity with $x$ with rate $d_{\infty}\pi(x)$, where $\pi(x)$ denotes the number of primes $\le x$. Let $P(x, k)$ be the number of $p \le x$ such that $R(\ell, p)$ is true for all $\ell | k$. By inclusion-exclusion we have
$$N(x, \delta) = \sum_k \mu(k)P(x, k),$$
where the sum goes through all $k$ whose all prime divisors are $\le \delta$.

Let $\xi_1 = \frac{1}{6}\log(x)$, $\xi_2 = x^{1/2}/\log(x)^2$, and $\xi_3 = x^{1/2}\log(x)$. Let $L(x, \eta_1, \eta_2)$ be the number of $p \le x$ such that $R(\ell, p)$ is true for at least one prime $\eta_1 \le \ell \le \eta_2$. Now
$$N(x, x-1) = N(x, \xi_1) + O(L(x, \xi_1, x-1)),$$
and
$$L(x, \xi_1, x-1) \le L(x, \xi_1, \xi_2) + L(x, \xi_2, \xi_3) + L(x, \xi_3, x-1).$$

We first prove that $N(x, \xi_1)$ grows asymptotically as $d_{\infty}\pi(x)$, after which we will show that $L(x, \xi_1, x-1)$ is small.

Let $\mathcal{P}(y)$ be the product of primes at most $y$. We have
$$N(x, \xi_1) = \sum_{k \mid \mathcal{P}(\xi_1)} \mu(k)P(x, k),$$
For any fixed $m$, the sum
$$\sum_{k | \mathcal{P}(m)} \mu(k)P(x, k)$$
is asymptotically $d_{\mathcal{P}(m)}\pi(x)$. Thus, for $m \to \infty$ this approaches $d_{\infty}\pi(x)$, which is the desired claim. However, we need the error term with $m = \xi_1$ to be $o(x/\log(x))$. This indeed is the case (under GRH), as we will now show. 

Let $W_S$ be the subgroup of $\mathbb{Q}^{\times}$ generated by $W_i, i \in S$, where $S$ is an arbitrary subset of $\{1, 2, \ldots , t\}$. To $W_S$ we associate the fields
$$L_{\ell, S} := \mathbb{Q}(\zeta_{\max\{q_i(\ell) | i \in S\}}, \{W_i^{1/q_i(\ell)}, i \in S \}),$$
and denote by $L_{n, S}$ the compositum of $L_{\ell, S}$ with $\ell \mid n$. We have $L_{n, \emptyset} = \mathbb{Q}$ for all $n$.

$N(x, \xi_1)$ can be calculated by inclusion-exclusion as
\begin{align}
\label{eq:incExc}
\sum_{k | \mathcal{P}(\xi_1)} \sum_{S \subset \{1, 2, \ldots , t\}} |\{p : p \le x, p \text{ splits in } L_{k, S}\}|\mu(k)^{|S|}.
\end{align}
By Theorem 1.4 of \cite{Cooke} we have
$$|\{p : p \le x, p \text{ splits in } L_{k, s}\}| = \frac{\li(x)}{[L_{k, S} : \mathbb{Q}]} + O\left(x^{1/2}\log(x\prod_{i \in S} q_i(k))\right)$$
under GRH. Here $\li(x)$ denotes the logarithmic integral $\int_0^x \frac{1}{\log(t)} dt$.

In the sum \eqref{eq:incExc} we have $2^{\pi(\xi_1)+t} = O(x^{\epsilon})$ summands, so the error term for the whole sum becomes $O(x^{1/2 + \epsilon}\log(\mathcal{P}(\xi_1)x))$ using the fact $q_i(\ell) = \ell$ for large enough $\ell$ and all $i$. As we have
$$\log(\mathcal{P}(\xi_1)) = \sum_{p \le \xi_1} \log(p) = O(\xi_1 \log(\xi_1)) = O((\log x)^2),$$
the error term for $N(x, \xi_1)$ is $o(x/\log x)$.

We are left with proving that $L(x, \xi_1, x-1) \le L(x, \xi_1, \xi_2) + L(x, \xi_2, \xi_3) + L(x, \xi_3, x-1)$ is small. We do this term-by-term.

\begin{lemma}
\label{lem:Lbounds}
We have $L(x, \xi_2, \xi_3) = o(x/\log(x))$ and $L(x, \xi_3, x-1) = o(x/\log(x))$.
\end{lemma}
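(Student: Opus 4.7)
The plan is to prove each of the two bounds separately using only elementary arguments; GRH plays no role in this lemma. In both ranges the condition $R(\ell,p)$ implies, for $\ell$ large enough that $q_i(\ell) = \ell$ for every $i$ (automatic once $\ell$ exceeds the largest prime divisor of $k_1 \cdots k_t$), that $\ell \mid p-1$ and that some fixed nontrivial element $w_i \in W_i$ is an $\ell$th power modulo $p$; such a $w_i \neq \pm 1$ exists because $W_i$ is infinite.

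For the upper range $L(x,\xi_3,x-1)$, I would follow Hooley's argument. If $R(\ell,p)$ holds for some $\ell > \xi_3 = x^{1/2}\log x$ and some index $i$, set $k = (p-1)/\ell$, so $k < x/\xi_3 = x^{1/2}/\log x$. The $\ell$th-power condition reads $w_i^{(p-1)/\ell} = w_i^k \equiv 1 \pmod{p}$, i.e.\ $p$ divides the (nonzero) numerator of $w_i^k - 1$ in lowest terms. That numerator has absolute value at most $C_i^k$ for some constant $C_i$, hence $O_i(k)$ prime divisors. Summing over $k \le x^{1/2}/\log x$ and the finitely many indices $i$ yields
\[
L(x,\xi_3,x-1) \;\ll\; \sum_{i}\sum_{k \le x^{1/2}/\log x} k \;\ll\; \frac{x}{\log^2 x} \;=\; o\!\left(\frac{x}{\log x}\right).
\]

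For the middle range $L(x,\xi_2,\xi_3)$ the bound $p \mid w_i^k - 1$ is too weak, since here $k$ can be as large as $x^{1/2}\log^2 x$. Instead I would use only the consequence $\ell \mid p-1$ and invoke the Brun--Titchmarsh inequality
\[
\#\{p \le x : p \equiv 1 \pmod{\ell}\} \;\ll\; \frac{x}{\ell \log(x/\ell)}.
\]
For $\ell \in (\xi_2,\xi_3)$ we have $x/\ell \ge x^{1/2}/\log x$, so $\log(x/\ell) \gg \log x$. Summing over such $\ell$ (and the fixed finite set of indices $i$) and applying Mertens' theorem gives
\[
L(x,\xi_2,\xi_3) \;\ll\; \frac{x}{\log x}\sum_{\xi_2 < \ell \le \xi_3}\frac{1}{\ell} \;=\; \frac{x}{\log x}\bigl(\log\log \xi_3 - \log\log \xi_2 + o(1)\bigr).
\]
Since $\log \xi_2, \log \xi_3 = \tfrac12 \log x + O(\log\log x)$, the difference of log-logs is $O(\log\log x/\log x)$, yielding the desired $o(x/\log x)$ bound.

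The main obstacle I anticipate is bookkeeping rather than conceptual: one must verify the reduction ``$R(\ell,p) \Rightarrow \ell \mid p-1$ and $w_i^{(p-1)/\ell} \equiv 1 \pmod{p}$'' holds uniformly in both ranges (which requires $\ell$ to exceed a constant depending only on the $k_i$) and that the implicit constants in the prime-divisor estimate for $w_i^k - 1$ remain harmless under summation over the finitely many indices $i$. Once these are addressed, both halves of the lemma follow from the standard Hooley-style elementary estimates above.
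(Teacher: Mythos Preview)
Your proposal is correct and follows essentially the same approach as the paper: Brun--Titchmarsh plus Mertens for the middle range $L(x,\xi_2,\xi_3)$, and Hooley's divisor-counting argument via $p \mid w_i^{(p-1)/\ell}-1$ for the upper range $L(x,\xi_3,x-1)$. The only cosmetic difference is that the paper bounds $\sum_p \log p$ via the size of $\prod_m (a^m-b^m)$ rather than counting prime divisors of each $w_i^k-1$ separately, but these are equivalent bookkeeping choices.
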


\begin{proof}
We have
$$L(x, \xi_2, \xi_3) \le \sum_{\xi_2 \le l \le \xi_3} P(x, \ell).$$
For $P(x, \ell)$ we have the inequality
$$P(x, \ell) \le |\{p : p \le x, p \equiv 1 \pmod{\ell}\}|,$$
as $p$ splitting in $\mathbb{Q}(\zeta_{q_i(\ell)}, W_i^{1/q_i(\ell)})$ means $p$ splits in $\mathbb{Q}(\zeta_{\ell})$, and therefore $p \equiv 1 \pmod{\ell}$. By Brun-Titchmarsh we have
$$|\{p : p \le x, p \equiv 1 \pmod{\ell}| \le \frac{2x}{(\ell-1)\log(x/\ell)} = O\left(\frac{x}{\ell\log(x)}\right),$$
so
$$\sum_{\xi_2 \le \ell \le \xi_3} P(x, \ell) = O\left( \frac{x}{\log(x)}\sum_{\xi_2 \le \ell \le \xi_3} \frac{1}{\ell} \right) = o\left(\frac{x}{\log(x)}\right).$$

For $L(x, \xi_3, x-1)$ we note that if $p$ is counted by $L(x, \xi_3, x-1)$, then $R(\ell, p)$ is true for some $\ell \ge \xi_3$, so $p$ splits in some $\mathbb{Q}(\zeta_{q_i(\ell)}, W_i^{1/q_i(\ell)})$.  This means that for any $w \in W_i$ we have $w^{(p-1)/q_i(\ell)} \equiv 1 \pmod{p}$. Fix some such $w = \frac{a}{b} > 1$. Now, any $p$ counted by $L(x, \xi_3, x-1)$ by the index $i \in \{1, \ldots , t\}$ divides
$$\prod_{m \le (x-1)/(\xi_3 - 1)} (a^m - b^m),$$
so
$$\prod_{p \text{ counted by } i} p = O\left( \prod_{m \le x^{1/2}/\log(x)} a^m\right)$$
and thus
$$\sum_{p \text{ counted by } i} \log(p) = O\left( \sum_{m \le x^{1/2}/\log(x)} m \right) = O(x/\log^2(x)).$$

Since we have a fixed number of indices $i$, this proves that the number of counted $p$ is $O(x/\log^2(x)) = o(x/\log(x))$.
\end{proof}

The last part is proving $L(x, \xi_1, \xi_2) = o(x/\log(x))$. For each $W_i$ fix some $w_i \neq -1, 0, 1$. Now for all large enough $\ell$, in particular for $\ell > \xi_1$, we have $[\mathbb{Q}(\zeta_{\ell}, w_i^{1/\ell}) : \mathbb{Q}] = \ell(\ell-1)$ by Proposition \ref{prop:kummerDisjoint}. Applying the GRH conditional version of the Chebotarev density theorem now gives

\begin{align*}
L(x, \xi_1, \xi_2) \le \sum_{\xi_1 \le \ell \le \xi_2} P(x, \ell) \\
\le \sum_{\xi_1 \le \ell \le \xi_2} \sum_{1 \le i \le t} \left( \frac{\li(x)}{[\mathbb{Q}(\zeta_{q_i(\ell)}, W_i^{1/q_i(\ell)}) : \mathbb{Q}]} + O(x^{1/2}\log(q_i(\ell)x)) \right) \\
\le \sum_{\xi_1 \le \ell \le \xi_2} \sum_{1 \le i \le t} \left(\frac{\li(x)}{\ell(\ell-1)} + O(x^{1/2}\log(\ell x))\right) \\
\le \li(x)O\left(\sum_{\xi_1 \le \ell \le \xi_2} \frac{1}{\ell^2} \right) + O(x^{1/2}\pi(\xi_2)\log(\xi_2x)) \\
= o(\li(x)) + O(x/\log(x)^2) \\
= o(x/\log(x)).
\end{align*}
This finishes the proof of $d(M) = d_{\infty}$.

\subsection{Proof of Theorem \ref{thm:densitynonzero}: $d_n \neq 0$ for all $n$ implies $d_{\infty} \neq 0$}

Let $W$ be the subgroup of $\mathbb{Q}^{\times}$ generated by all of $W_i$. By Lemma 5.6. of Lenstra \cite{Lenstra} we have $L_{\ell}$ and $L_nF$ linearly disjoint for all $\ell \nmid n$, $\ell$ large enough, as we have $L_{\ell} = \mathbb{Q}(\zeta_{\ell}, W^{1/{\ell}})$ for $\ell$ large. (Alternatively, apply Proposition \ref{prop:kummerDisjoint}.)

By this we get the product formula
$$d_{n\ell} = d_nd_{\ell}'.$$
Here $d_{\ell}'$ is the proportion of elements of $\Gal(L_{\ell}/\mathbb{Q})$ not fixing any of the subfields $\mathbb{Q}(\zeta_{q_i(\ell)}, W_i^{1/q_i(\ell)})$. Thus, for some $n$ we have
\begin{align}
\label{eq:prod}
d_{\infty} = d_n\prod_{\ell \nmid n} d_{\ell}'.
\end{align}
We are left with proving that this infinite product converges to a strictly positive value assuming no term of it is zero.

The Galois group $\Gal(L_{\ell}/\mathbb{Q})$ is, for $\ell$ large enough, of size $(\ell - 1)\ell^r$, where $r$ is the rank of $W$ (\cite{Lenstra}, Lemma 5.2, or by Proposition \ref{prop:kummerDisjoint}). The number of elements of $\Gal(L_{\ell}/\mathbb{Q})$ fixing the subfield $\mathbb{Q}(\zeta_{\ell}, W_i^{1/\ell})$ for a given $i$ is $O(\ell^{r-1})$, as this subfield has degree at least $(\ell-1)\ell$ for $\ell$ large enough. By the union bound,
$$d_{\ell}' \ge 1 - \frac{t}{\ell^2},$$
implying that the infinite series indeed is positive unless $d_{\ell}' = 0$ for some $\ell$.

We remark that in the case $t = 1$ one has
$$d_{\ell}' = 1 - \frac{1}{(\ell - 1)\ell^r}$$
for $\ell$ large enough, where $r$ is the rank of $W_1$. Therefore, for $t = 1$ the density $d_{\infty}$ is always a rational multiple of
$$\prod_{\ell} 1 - \frac{1}{(\ell - 1)\ell^r}.$$
In the general case $d_{\ell}'$ can be calculated by inclusion-exclusion in terms of the ranks of subgroups generated by a subset of $W_1, \ldots , W_t$.

\section{Proof of Theorem \ref{thm:equal}: equality of orders}

The proof of the existence of the densities in our results is postponed to Section \ref{sec:ordering}.

To demonstrate the methods we first prove the positivity part of Corollary \ref{cor:equalPositive}. Let $a_1, \ldots , a_t$ be positive rationals different from $1$. For each $i$, choose $W_i = \{a_i^n | n \in \mathbb{Z}\}$ and $k_i = 2$. Let $F = \mathbb{Q}(\sqrt{a_1}, \ldots , \sqrt{a_t})$ and $C = \{\id_F\}$. Now $M$ contains those primes $p$ for which the index of $W_i$ modulo $p$ divides $2$ for all $i$, and for which $a_i$ is a quadratic residue modulo $p$. Therefore, $p \in M$ if and only if $\ord_p(a_1) = \ldots = \ord_p(a_t) = \frac{p-1}{2}$.

The statement $d(M) \neq 0$ is, by Theorems \ref{thm:densitylimit} and \ref{thm:densitynonzero}, equivalent to the statement $d_n \neq 0$ for all $n$. That $d_n \neq 0$ for all $n$ follows by noting that the element $\sigma$ of $\Gal(FL_n/\mathbb{Q})$ which maps an element to its complex conjugate fixes the real field $F$, but does not fix any of the nonreal fields $\mathbb{Q}(\zeta_{q_i(\ell)}, W_i^{1/q_i(\ell)})$.

We then focus on the general case of Theorem \ref{thm:equal}. Let $a_1, \ldots , a_k$ be given rationals not equal to $-1, 0, 1$. Let $W_i$ be the subgroup generated by $a_i$. We will choose all $k_i$ to be equal to $3 \cdot 2^s$, where $s$ a integer chosen later. Pick $F = \mathbb{Q}(\zeta_{3 \cdot 2^s}, a_1^{1/3 \cdot 2^s}, \ldots , a_k^{1/3 \cdot 2^s})$ with $C = \{\id_F\}$.  Define $q(n) = q_1(n)$, so we have $q(2) = 2^{s+1}$, $q(3) = 9$, and $q(\ell) = \ell$ for $\ell \ge 5$. Let
$$F' = \mathbb{Q}(\zeta_{3 \cdot 2^s}, a_1^{1/2^s}, \ldots , a_k^{1/2^s}).$$

By Theorems \ref{thm:densitylimit} and \ref{thm:densitynonzero} it suffices to show $d_n \neq 0$ for all $n$.  We will first prove that there exists an element in $\Gal(F'L_2/\mathbb{Q})$ fixing $F'$ but not any of $\mathbb{Q}(\zeta_{2^{s+1}}, a_i^{1/2^{s+1}})$ under the condition of the theorem, when $s$ is chosen suitably. This is the difficult part of the proof, as the obstructions of the theorem live inside $F'L_2$. We then extend the constructed map to $FL_n$ for any $n$, this being relatively easy.

\subsection{Controlling parities of orders}

We first present a lemma.

\begin{lemma}
\label{lem:oddBasis}
Let $a_1, \ldots , a_k$ be non-zero rationals. Assume that no product of $a_i$ with (possibly negative) integer exponents equals $-1$. Then there exists a set $S \subset \{a_1, \ldots , a_k\}$ with the following properties:
\begin{itemize}
\item[(i)] The elements of $S$ are multiplicatively independent.
\item[(ii)] For all $1 \le i \le k$ there exists an odd integer $e$ such that $a_i^e$ may be expressed as a product of the elements of $S$ with (possibly negative) integer exponents.
\end{itemize}
\end{lemma}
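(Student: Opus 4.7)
The plan is to translate the multiplicative question into a linear-algebraic one over $\mathbb{Z}$. The hypothesis that no integer combination of the $a_i$ multiplies to $-1$ says precisely that the subgroup $G = \langle a_1, \ldots, a_k\rangle \subset \mathbb{Q}^{\times}$ contains no element of order $2$; since $\pm 1$ are the only torsion elements of $\mathbb{Q}^{\times}$, this forces $G$ to be torsion-free and hence $G \cong \mathbb{Z}^r$ for some $r \le k$. Fixing such an isomorphism, encode each $a_i$ by its coordinate vector $v_i \in \mathbb{Z}^r$, and form the $k \times r$ integer matrix $M$ whose rows are the $v_i$.

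With this in hand, I would reformulate the conditions of the lemma in terms of $M$. A subset $S = \{a_{i_1}, \ldots, a_{i_r}\}$ of size $r$ is multiplicatively independent precisely when the corresponding $r \times r$ submatrix $M'$ of $M$ has nonzero determinant; in that case $\langle S\rangle$ is a sublattice of $G \cong \mathbb{Z}^r$ of index $|\det M'|$, so $G / \langle S\rangle$ is a finite abelian group of order $|\det M'|$. If this index is odd, then every element of $G/\langle S\rangle$, and in particular the image of each $a_i$, has odd order, which gives property (ii). So the task reduces to producing $r$ rows of $M$ whose determinant is odd.

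For this, I would invoke the fact that, because the $a_i$ generate $G$ by definition, the rows of $M$ generate $\mathbb{Z}^r$ as a $\mathbb{Z}$-module, equivalently that the $\gcd$ of the $r \times r$ minors of $M$ equals $1$. Reducing $M$ modulo $2$ therefore yields a matrix $\bar M$ of rank $r$ over $\mathbb{F}_2$, so some $r$ of its rows form a basis of $\mathbb{F}_2^r$; the corresponding $r \times r$ submatrix of $M$ has nonzero determinant modulo $2$, hence odd determinant, and the associated $S$ is as required.

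The only real obstacle here is the first, conceptual step: recognising that the prohibition on $\prod a_i^{e_i} = -1$ is exactly what promotes $G$ from ``free up to a possible copy of $\pm 1$'' to genuinely free, so that the linear-algebra picture over $\mathbb{Z}$ applies cleanly. After that point the proof is essentially the standard observation that any $\mathbb{Z}$-generating family of $\mathbb{Z}^r$ contains a basis modulo each prime, specialised to $p = 2$.
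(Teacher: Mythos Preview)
Your proof is correct and takes a genuinely different route from the paper's. The paper argues by minimality: it picks any $S\subset\{a_1,\ldots,a_k\}$ of smallest size satisfying (ii) (the full set works), and shows (i) by contradiction. If $\prod_{s\in S}s^{f(s)}=1$ nontrivially, one repeatedly takes square roots---this is where the hypothesis is used, to guarantee the root is $+1$ rather than $-1$---until some exponent $f(t)$ is odd, and then $t$ can be eliminated from $S$, contradicting minimality.

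Your argument is more structural: the hypothesis is used once, up front, to force $G=\langle a_1,\ldots,a_k\rangle$ to be free of some rank $r$, after which the problem becomes pure lattice theory. You then observe that (i)+(ii) follow from finding $r$ of the generators spanning a sublattice of odd index, and produce these by reducing the coordinate matrix modulo $2$. This buys a little extra: you see directly that $|S|=r$ and $[G:\langle S\rangle]$ is odd, and the argument visibly generalises (replace $2$ by any prime). The paper's proof, by contrast, is entirely elementary---no structure theorem, no Smith normal form---and stays closer to the multiplicative language used in the application that follows.
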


\begin{proof}
Choose a set $S \subset \{a_1, \ldots , a_k\}$ of minimum size satisfying the second condition. (At least one such set exists, as we may pick $\{a_1, \ldots , a_k\}$.) We prove that condition (i) holds.

Assume not. Write
$$\prod_{s \in S} s^{f(s)} = 1$$
for some $f : S \to \mathbb{Z}$ which is not zero everywhere. As long as all $f(s)$ are even, take square roots. After each such operation the right hand side stays $1$, as no product of $a_i$ equals $-1$. Thus, we may assume that $t \in S$ is such that $f(t)$ is odd, and write
$$t = \prod_{t \neq s \in S} s^{-f(s)/f(t)}.$$ 
We now prove that $S' = S \setminus \{t\}$ satisfies condition (ii), which leads to a contradiction.

As $S$ satisfies condition (ii), for any $a_i$ we may write
$$a_i^e = \prod_{s \in S} s^{E(s)},$$
for some $E : S \to \mathbb{Z}$. Write this as
$$a_i^e = \prod_{t \neq s \in S} s^{-E(t)f(s)/f(t)} \prod_{t \neq s \in S} s^{E(s)}.$$
Now raise both sides to the odd power $f(t)$ to obtain a desired representation for $a_i$ in terms of $S'$.
\end{proof}

We divide into two cases according to whether or not there is some product of $a_i$ (with possibly negative exponents) equal to $-1$ or not.

\textit{Case 1. No product of the numbers $a_i$ is equal to $-1$.}

Let $\{b_1, \ldots , b_v\}$ denote a subset of the type of the proposition when applied to $a_1, \ldots , a_k$. Let $N$ be as in Proposition \ref{prop:kummerGalois} when applied to $b_1, \ldots, b_v$, and write $N = 2^t \cdot m$, where $m$ is odd. Now there exists an automorphism of
$$\mathbb{Q}(\zeta_{6N}, b_1^{1/N}, \ldots , b_v^{1/N})$$
sending $b_i^{1/N} \to b_i^{1/N}$ and $\zeta_{6N} \to -\zeta_{6N}$. By restricting this gives an automorphism $\sigma$ of
$$K = \mathbb{Q}(\zeta_{3 \cdot 2^{t+1}}, b_1^{1/2^t}, \ldots , b_v^{1/2^t})$$
which maps $b_i^{1/2^t} \to b_i^{1/2^t}$ and $\zeta_{3 \cdot 2^{t+1}} \to -\zeta_{3 \cdot 2^{t+1}}$. 

We now claim that $a_i^{1/2^t}$ belongs to $K$ for any $i$, and furthermore that $\sigma$ fixes $a_i^{1/2^t}$, meaning that we have found a map we were looking for with the choice $s = t$.

Write
$$a_i^e = \prod_{j = 1}^v b_j^{e_j}$$
with $e$ odd. Take $2^t$th roots and raise both sides to the $e^{-1} \pmod{2^t}$th power to get a rational times $a_i^{1/2^t}$ on left hand side and an element of $K$ on the right hand side, proving $a_i^{1/2^t} \in K$. By mapping both sides with $\sigma$ we get that $a^{1/2^t}$ is fixed.\\

\textit{Case 2. There is a product of the numbers $a_i$ equal to $-1$.}

We divide into two subcases depending on whether the exponents in the product equal to $-1$ have an odd or even sum. In both cases we assume that there is no product of $a_i$ with an odd number of terms equal to $1$.

\textit{Case 2.1. There is a product of an even number of $a_i$ equal to $-1$.}

Let $\{b_1, b_2, \ldots , b_v\}$ be a subset given by Lemma \ref{lem:oddBasis} when applied to the numbers $|a_i|$. Let $N$ be as in Proposition \ref{prop:kummerGalois} when applied to $b_1, \ldots , b_v$ and write $N = 2^t \cdot m$ with $m$ odd. We obtain an automorphism of
$$\mathbb{Q}(\zeta_{12N}, b_1^{1/2N}, \ldots , b_v^{1/2N})$$
fixing $\zeta_{12N}$ and mapping $b_i^{1/2N} \to -b_i^{1/2N}$. Restricting to
$$K = \mathbb{Q}(\zeta_{3 \cdot 2^{t+2}}, b_1^{1/2^{t+1}}, \ldots , b_v^{1/2^{t+1}})$$
gives a map $\sigma$ fixing $\zeta_{3 \cdot 2^{t+2}}$ and mapping $b_i^{1/2^{t+1}} \to -b_i^{1/2^{t+1}}$.

We claim that this $\sigma$ is what we want for $s = t$, assuming that there is no odd product of $a_i$ equal to $1$. This is done by proving that $a_i^{1/2^{t+1}} \to -a_i^{1/2^{t+1}}$ for all $i$.

Note first that $|a_i|^{1/2^{t+1}} \in K$ for all $i$, with the proof being the same as for the analogous claim in the previous case. As $\zeta_{2^{t+2}} \in K$, we have $a_i^{1/2^{t+1}} \in K$.

Note then that since $\zeta_{2^{t+2}}$ is fixed, we have $a_i^{1/2^{t+1}} \to -a_i^{1/2^{t+1}}$ if and only if $|a_i|^{1/2^{t+1}} \to -|a_i|^{1/2^{t+1}}$.

We now prove that $|a_i|^{1/2^{t+1}} \to -|a_i|^{1/2^{t+1}}$. By the choice of $b_i$, we may write
\begin{align}
\label{eq:equ1}
|a_i|^e = \prod_{j = 1}^v b_j^{e_j}
\end{align}
with $e$ odd. We first prove that the sum of $e_j$ is odd. Assume the contrary. We may write each term $b_j$ as $\pm a_{j'}$ for some $j'$ and choice of sign $\pm$. If the sign is minus, we may take an even number of the numbers $a_1, \ldots , a_k$ with product $-1$ in the place of the minus sign. Doing the same replacement for $|a_i|$ we obtain that there exists an odd number of $a_1, \ldots , a_k$ with product $1$, a contradiction.

Thus, the sum of $e_j$ is odd. Now, take $2^{t+1}$th roots of \eqref{eq:equ1} and map both sides by $\sigma$. The right hand side is mapped to its additive inverse, and thus $|a_i|^{e/2^{t+1}} \to -|a_i|^{e/2^{t+1}}$. Raising to the power of $e^{-1} \pmod{2^{t+1}}$ gives $|a_i|^{1/2^{t+1}} \to -|a_i|^{1/2^{t+1}}$, as desired.\\

\textit{Case 2.2. There is a product of an odd number of $a_i$ equal to $-1$.}

Let $\{b_1, b_2, \ldots , b_v\}$ be again a subset given by Lemma \ref{lem:oddBasis} for the numbers $|a_i|$. Let $x_1, x_2, \ldots , x_v \in \{-1, 1\}$ be parameters. As in the previous cases, we apply Proposition \ref{prop:kummerGalois} to $b_1, \ldots , b_v$. We get that for any choice of $x_i$ the exists an automorphism $\sigma$ of
$$K = \mathbb{Q}(\zeta_{3 \cdot 2^{t+2}}, b_1^{1/2^{t+1}}, \ldots , b_v^{1/2^{t+1}})$$
mapping $\zeta_{3 \cdot 2^{t+2}} \to -\zeta_{3 \cdot 2^{t+2}}$ and $b_i^{1/2^{t+1}} \to x_ib_i^{1/2^{t+1}}$.

Similarly to the case 2.1 we have $|a_i|^{1/2^{t+1}} \in K$ for all $i$. The difference is that this time $a_i^{1/2^{t+1}}$ is fixed if and only if $(-a_i)^{2^{t+1}} \to -(-a_i)^{1/2^{t+1}}$, as $\zeta_{2^{t+2}} \to -\zeta_{2^{t+2}}$. 

For $a_i > 0$ we may write
$$a_i^e = \prod_{j = 1}^v b_j^{e_j}$$
with $e$ odd. Similarly to before, take $2^{t+1}$th roots, raise to the $e^{-1} \pmod{2^{t+1}}$th power and map by $\sigma$. We obtain that $a_i^{1/2^{t+1}}$ is not mapped to itself if and only if a linear equation of the form
\begin{align}
\label{eq:equ3}
\sum_{j = 1}^v e_jx_j \equiv 1 \pmod{2}
\end{align}
is satisfied.

For $a_i < 0$ we proceed similarly by considering the representation of $|a_i|$ and get a linear equation of the form
\begin{align}
\label{eq:equ4}
\sum_{j = 1}^v e_jx_j \equiv 0 \pmod{2}
\end{align}
(where the $e_j$ might be different from the ones in \eqref{eq:equ3}).

By elementary linear algebra, there exists a solution unless one can take a linear combination of the equations \eqref{eq:equ4} and an odd number of the equations \eqref{eq:equ3}, resulting in $0 \equiv 1 \pmod{2}$. We prove that this is the case only if some odd number of terms $a_1, \ldots , a_k$ have product $1$. 

This situation corresponds to having
\begin{align}
\label{eq:equ2}
\prod_{a_i > 0} a_i^{f_i} \prod_{a_i < 0} |a_i|^{f_i} = t^2,
\end{align}
where $t$ is some product of the numbers $b_1, \ldots , b_v$ and
$$\sum_{a_i > 0} f_i \equiv 1 \pmod{2}.$$
Note that $t^2$ is a product of an even number of $a_i$.

Drop the absolute signs in the terms $|a_i|$ in \eqref{eq:equ2}. If the sign of the left hand side stays the same, i.e. the sum of $f_i, a_i < 0$ is even, we get that there is an odd number of $a_1, \ldots , a_k$ having product $1$, a contradiction. 

If the sign changes, i.e. the sum of $f_i, a_i < 0$ is odd, we obtain that there is an even number of $a_1, \ldots , a_k$ having product $-1$. Now we have both an odd and even number of $a_i$ having product $-1$, so we may multiply the products and obtain an odd number of $a_i$ having product $1$, a contradiction.

\subsection{Extending to $d_n \neq 0$ for all $n$}

We first extend our element in $\Gal(F'L_2/\mathbb{Q})$ to $\Gal(FL_6/\mathbb{Q})$. Note first that the degree of 
$$F'L_2 = \mathbb{Q}(\zeta_{3 \cdot 2^s}, a_1^{1/2^s}, \ldots , a_k^{1/2^s})$$
is a power of two, while the degree of
$$K = \mathbb{Q}(\zeta_9, a_1^{1/3}, \ldots , a_k^{1/3})$$
is two times a power of three. Thus, the intersection of $F'L_2$ and $K$ has degree at most two, so the intersection is $\mathbb{Q}(\zeta_3)$. 

This means that given any elements $\sigma_{F'L_2} \in \Gal(F'L_2/\mathbb{Q})$ and $\sigma_K \in \Gal(K/\mathbb{Q})$ which agree on $\mathbb{Q}(\zeta_3)$ one can find an element in the Galois group of the compositum
$$F'L_2K = \mathbb{Q}(\zeta_{9 \cdot 2^s}, a_1^{1/3 \cdot 2^s}, \ldots , a_k^{1/3 \cdot 2^s}) = FL_6$$
which restricted to $F'L_2$ gives $\sigma_{F'L_2}$ and to $K$ gives $\sigma_K$. 

We apply this with $\sigma_{F'L_2}$ being the map constructed in the previous subsection and with $\sigma_K$ mapping $\zeta_9 \to \zeta_9^4$ and fixing $a_i^{1/3}$. Such a map $\sigma_K$ exists. Indeed, by Proposition \ref{prop:kummerAbelian} the largest abelian subextension of $\mathbb{Q}(\zeta_3, a_1^{1/3}, \ldots , a_k^{1/3})$ is of the form
$$\mathbb{Q}(\zeta_3, \sqrt{b_1}, \ldots , \sqrt{b_K})$$
for some $b_i$. The degree of this extension is a power of two, while $\zeta_9$ has degree $6$ over $\mathbb{Q}$. 

The resulting combination of $\sigma_{F'L_2}$ and $\sigma_K$ fixes $F$ but does not fix any of $\mathbb{Q}(\zeta_{q(\ell)}, a_i^{1/q(\ell)})$ for $\ell \in \{2, 3\}$. Thus, we have the desired map for $FL_6$.

We finally prove that if $\ell \ge 5$, then one may extend a good element of $\Gal(FL_n/\mathbb{Q})$ to a good element of $\Gal(FL_{n\ell}/\mathbb{Q})$. The idea is the same as for $\ell = 3$: do not fix $\zeta_{q(\ell)}$. We have to check that
$$\zeta_{\ell} \not\in FL_n,$$
when $\ell \nmid n$. By Proposition \ref{prop:kummerAbelian} the largest abelian subfield of $FL_n$ is of the form
$$\mathbb{Q}(\zeta_{q(n)}, \sqrt{b_1}, \ldots , \sqrt{b_m})$$
for some $b_i \in \mathbb{Q}$.  We prove that $\zeta_{\ell}$ is not contained in any field of this type. 

We may assume that $b_i$ are distinct primes coprime with $q(n) \equiv 0 \pmod{4}$ and $b_m = \ell$. Note that
$$\mathbb{Q}(\zeta_{q(n)}, \sqrt{b_1}, \ldots , \sqrt{b_m}) \subset \mathbb{Q}(\zeta_{q(n)b_1 \cdots b_{m-1}}, \sqrt{b_m}) \subset \mathbb{Q}(\zeta_{q(n)b_1 \cdots b_{m-1}b_m}).$$
The field in the middle has degree at most $2\phi(q(n)b_1 \cdots b_{m-1})$, which is less than the degree $\phi(q(n)b_1 \cdots b_m)$ of the field on the right. Therefore  $\zeta_{\ell}$ does not belong to the field in the middle and thus not to the field on the left.

\section{Proof of Theorem \ref{thm:ordering}: Order of orders} \label{sec:ordering}

We first prove that the densities in our theorems exist. This is implied by the following two lemmas. (For Theorem \ref{thm:system} note that for $P \in \mathbb{Z}[x]$ the set of primes $p$ for which $P(x) \equiv 0 \pmod{p}$ is solvable corresponds to those $p$ having suitable Artin symbol in the splitting field of $P$.)

\begin{lemma}
\label{lem:denExist}
Assume GRH. Let $a_1, \ldots , a_t$ be rationals not equal to $-1, 0, 1$, let $k_1, \ldots , k_t$ be positive integers, let $F'$ be a finite Galois extension and let $C'$ be a conjugacy class in $\Gal(F/\mathbb{Q})$. The density of the primes $p$ satisfying $\left(\frac{F'/\mathbb{Q}}{p}\right) \in C'$ and $\ord_p(a_i) = (p-1)/k_i$  for all $i$ exists.
\end{lemma}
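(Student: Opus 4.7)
The plan is to reduce the claim to a direct application of Theorem \ref{thm:densitylimit}. The target condition $\ord_p(a_i) = (p-1)/k_i$ is equivalent to the index of $W_i := \langle a_i \rangle$ modulo $p$ being exactly $k_i$, and this splits as the conjunction of (a) the index divides $k_i$, and (b) $k_i$ divides the index. Part (a) is precisely the native condition of Theorem \ref{thm:densitylimit}, while for an unramified $p$, part (b) is equivalent to $p$ splitting completely in the Kummer-type field $K_i := \mathbb{Q}(\zeta_{k_i}, a_i^{1/k_i})$, which is an Artin-symbol condition. The idea is therefore to absorb all the splitting conditions into enlarged Galois data and invoke the main density theorem once.

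Concretely, I would form the compositum $F := F' K_1 \cdots K_t$, which is Galois over $\mathbb{Q}$ since each factor is, and define
$$C := \{\sigma \in \Gal(F/\mathbb{Q}) : \sigma|_{F'} \in C' \text{ and } \sigma|_{K_i} = \id_{K_i} \text{ for all } i\}.$$
Since $C'$ is a conjugacy class of $\Gal(F'/\mathbb{Q})$ and each $K_i/\mathbb{Q}$ is Galois, the set $C$ is stable under conjugation in $\Gal(F/\mathbb{Q})$, hence a union of conjugacy classes. Set $W_i := \langle a_i \rangle$, which is an infinite, finitely generated subgroup of $\mathbb{Q}^{\times}$ since $a_i \notin \{-1, 0, 1\}$.

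For any prime $p$ unramified in $F$ and not dividing the numerator or denominator of any $a_i$---only finitely many primes are excluded, which is irrelevant for densities---the condition $(F/p) \in C$ encodes exactly that $(F'/p) \in C'$ and that $p$ splits completely in each $K_i$, i.e., $k_i$ divides the index of $W_i$ mod $p$. Combined with the condition ``index divides $k_i$'' from Theorem \ref{thm:densitylimit}, this forces the index to equal $k_i$, i.e., $\ord_p(a_i) = (p-1)/k_i$. Thus the target set coincides up to a finite discrepancy with the set $M$ of Theorem \ref{thm:densitylimit} for the data $(F, C, \{W_i\}, \{k_i\})$, and its density exists by that theorem. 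I do not expect any substantive obstacle: the content of the lemma is entirely packaged inside Theorem \ref{thm:densitylimit} once the exact-order condition is translated into Lenstra-type data, and the only verification of any content is the conjugation-invariance of $C$, which is immediate from the fact that each $K_i$ is Galois over $\mathbb{Q}$.
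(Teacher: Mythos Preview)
Your proposal is correct and is essentially the same as the paper's proof: both enlarge $F'$ to the compositum with the Kummer fields $\mathbb{Q}(\zeta_{k_i}, a_i^{1/k_i})$, absorb the ``$k_i$ divides the index'' condition into the Artin-symbol data $C$, and then invoke Theorem~\ref{thm:densitylimit} once. Your write-up is in fact somewhat more careful than the paper's, which states the same construction more tersely without spelling out the conjugation-invariance of $C$ or the equivalence of the splitting and divisibility conditions.
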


The lemma is proven by choosing the set $M$ in Theorem \ref{thm:densitylimit} suitably. A similar choice was made when proving the positivity part in Theorem \ref{thm:equal}, and similar choices will be made in the proofs of the other results.

\begin{proof}
Let $K$ be the least common multiple of $k_i$. Apply Theorem \ref{thm:densitylimit} with $W_i = \{a_i^n | n \in \mathbb{Z}\}$, $$F = F'(\zeta_K, a_1^{1/k_1}, \ldots , a_t^{k_t})$$
and with $C \subset \Gal(F/\mathbb{Q})$ consisting of the (possibly empty) set of elements of $\Gal(F/\mathbb{Q})$ which fix
$$\mathbb{Q}(\zeta_K, a_1^{1/k_1}, \ldots , a_t^{k_t})$$
and whose restriction to $\Gal(F'/\mathbb{Q})$ belongs to $C$.
\end{proof}

\begin{lemma}
\label{lem:bigOrders}
Assume GRH. Let $a$ be a rational with $a \neq -1, 0, 1$. For any $\epsilon > 0$ there exists $C > 0$ such that the density of the primes $p$ satisfying $\ord_p(a) \ge (p-1)/C$ is over $1 - \epsilon$.
\end{lemma}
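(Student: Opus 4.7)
The lemma is equivalent to saying that for each $\epsilon > 0$ one can choose $C$ so that the upper density of primes $p$ with $I(p) := (p-1)/\ord_p(a) > C$ is below $\epsilon$. My plan is to decompose this ``bad'' event into two pieces: one coming from large primes appearing in $I(p)$, and one coming from high prime powers of small primes.

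I would fix a threshold $M$ and, for each prime $\ell \le M$, an exponent $k_\ell$, both to be chosen later, and set $C = \prod_{\ell \le M} \ell^{k_\ell}$. The key observation is that $I(p) > C$ forces either (a) some prime $\ell > M$ to divide $I(p)$, or (b) $\ell^{k_\ell} \mid I(p)$ for some $\ell \le M$; otherwise the prime factorization of $I(p)$ would yield $I(p) \le \prod_{\ell \le M} \ell^{k_\ell - 1} < C$. Since the condition ``$\ell^k \mid I(p)$'' is equivalent to $p$ splitting in $\mathbb{Q}(\zeta_{\ell^k}, a^{1/\ell^k})$, both events are accessible by Chebotarev.

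For (b), Proposition \ref{prop:kummerQ} gives $[\mathbb{Q}(\zeta_{\ell^{k_\ell}}, a^{1/\ell^{k_\ell}}):\mathbb{Q}] \to \infty$ with $k_\ell$, so I would choose $k_\ell$ for each $\ell \le M$ so that the reciprocal density is below $\epsilon/(2M)$, bounding (b)'s total contribution by $\epsilon/2$. For (a), I plan to mirror the splitting into $\ell \in (M, \xi_1]$ and $\ell \in (\xi_1, x-1]$ used in the proof of Theorem \ref{thm:densitylimit}. The contribution from $\ell > \xi_1$ is at most $L(x, \xi_1, x-1) = o(x/\log x)$ by Lemma \ref{lem:Lbounds} together with the $L(x, \xi_1, \xi_2)$ estimate in Section 4.1, so it contributes nothing to the density. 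For $\ell \in (M, \xi_1]$, GRH-effective Chebotarev yields
$$\bigl|\{p \le x : \ell \mid I(p)\}\bigr| = \frac{\li(x)}{[\mathbb{Q}(\zeta_\ell, a^{1/\ell}):\mathbb{Q}]} + O(x^{1/2}\log(\ell x)),$$
and summing (using $[\mathbb{Q}(\zeta_\ell, a^{1/\ell}):\mathbb{Q}] \gg \ell(\ell-1)$ from Proposition \ref{prop:kummerQ}) gives $\li(x) \sum_{\ell > M} O(1/\ell^2) + o(x/\log x)$. Taking $M$ large enough that the remaining tail sum is below $\epsilon/2$ completes the bound.

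The main technical point is verifying that the tail over $\ell > \xi_1$ uniformly contributes $o(x/\log x)$; this follows because the arguments in Lemma \ref{lem:Lbounds} and the $L(x, \xi_1, \xi_2)$ estimate apply verbatim with any starting point above our fixed $M$.
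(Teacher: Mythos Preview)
Your argument is correct. The decomposition into ``a large prime divides $I(p)$'' versus ``a high power of a small prime divides $I(p)$'' is sound, the reduction of each event to splitting in a Kummer-type field is right, and your recycling of the $L(x,\xi_1,\xi_2)$, $L(x,\xi_2,\xi_3)$, $L(x,\xi_3,x-1)$ estimates from Section~4.1 goes through verbatim in the one-group case $W=\langle a\rangle$, $q(\ell)=\ell$. One small remark: you establish that the \emph{lower} density of the good set exceeds $1-\epsilon$; if you want the density itself to exist, observe that $\{p: I(p)\le C\}=\bigcup_{k\le C}\{p:I(p)=k\}$ is a finite union of sets each having a density by the usual inclusion--exclusion over finitely many Kummer-type fields (or by Lemma~\ref{lem:denExist}).

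The paper, however, does not prove this lemma at all: its proof is the single line ``See \cite{Wagstaff} (Section 5).'' So your route is genuinely different in that you supply a self-contained argument built from the paper's own estimates in Section~4.1 rather than deferring to an external reference. What this buys you is independence from Wagstaff's paper and a demonstration that the Hooley-type machinery already assembled here suffices; what the paper's approach buys is brevity. Substantively your argument is the same circle of ideas as Wagstaff's (bound the index distribution via splitting conditions and effective Chebotarev), so there is no tension between the two.
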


\begin{proof}
See \cite{Wagstaff} (Section 5).
\end{proof}

We then prove Theorem \ref{thm:ordering}.

\begin{proof}
Let $h_1, \ldots , h_k$ be integers such that the density of primes $p$ with
$$\ord_p(a_i) = \frac{p-1}{h_i}$$
is positive. Such integers exist by Lemma \ref{lem:bigOrders}.

Let $\ell_1 = 1$. For each $i = 2, 3, \ldots , k$ we do the following. Let $\ell_i$ be a large prime, large enough such that
$$\ell_ih_i > \ell_{i-1}h_{i-1}$$
and that the extension 
$$K_i = \mathbb{Q}(\zeta_{\ell_i^2}, a_1^{1/\ell_i}, \ldots , a_k^{1/\ell_i})$$
is independent of ``everything else'', i.e. it is linearly disjoint with fields of the form
$$\mathbb{Q}(\zeta_n, a_1^{1/n}, \ldots , a_k^{1/n})$$
with $\ell_i \nmid n$. The existence of such $\ell_i$ is guaranteed by Proposition \ref{prop:kummerDisjoint}. In fact, any large enough $\ell_i$ suits. 

We first prove that for $\ell_i$ large enough, there exists an automorphism of $K_i$ fixing $\zeta_{\ell_i}$ and $a_i^{1/\ell_i}$ which does not fix $\zeta_{\ell_i}^2$ nor any of $a_j^{1/\ell_i}, j \neq i$. By Proposition \ref{prop:kummerDisjoint}, we know that the field $\mathbb{Q}(\zeta_{\ell_i}, a_i^{1/\ell_i}, a_j^{1/\ell_i})$ has degree $(\ell_i - 1)\ell_i^2$ for $\ell_i$ large enough by the multiplicative independence of $a_i$ and $a_j$. Thus, exactly one of the $\ell_i$ maps fixing $\zeta_{\ell_i}$ and $a_i^{1/\ell_i}$ also fixes $a_j^{1/\ell_i}$.

Therefore, a (uniformly) random isomorphism of $K_i$ fixing $\zeta_{\ell_i}$ and $a_i^{1/\ell_i}$ fixes at least one of $a_j^{1/\ell_i}, j \neq i$ with probability approaching zero as $\ell_i \to \infty$. One similarly sees $\zeta_{\ell_i^2}$ is mapped to itself with probability approaching zero. This proves the existence of the desired automorphism of $K_i$ for $\ell_i$ largen.

We then note that as the local densities corresponding to the primes $p$ satisfying
$$\ord_p(a_j) = \frac{p - 1}{h_jq_j}, j < i, \ord_p(a_j) = \frac{p-1}{h_j}, j \ge i$$
is by assumption positive, then the local densities corresponding to
$$\ord_p(a_j) = \frac{p-1}{h_jq_j}, j \le i, \ord_p(a_j) = \frac{p-1}{h_j}, j > i$$
are positive, too. This follows from the fact that $K_i$ is linearly disjoint with the other Kummer-type fields arising in the local densities. By Theorems \ref{thm:densitylimit} and \ref{thm:densitynonzero} this implies the result.
\end{proof}

\section{Proof of Theorem \ref{thm:system}: Solvability of system of equations}

We have already in Section \ref{sec:ordering} proven that the density exists, so we focus on positivity.

Let $K$ be the compositum of splitting fields of $P_i$ and let $C$ consist of the identity of $\Gal(K/\mathbb{Q})$. Let $p_1, \ldots , p_t$ be the primes dividing some numerator or denominator of $a_i, b_i, i = 1, \ldots , k$. Let $N$ be the integer obtained by applying Proposition \ref{prop:kummerGalois} to the numbers $p_i$ with the field $K$.

We prove that the density of the primes $p$ with $\ord_p(a_1) = \ord_p(b_1) = \ldots = \ord_p(b_k) = (p-1)/N$ and $\left(\frac{K/\mathbb{Q}}{p}\right) \in C$ is positive. This is clearly sufficient. We apply Theorems \ref{thm:densitylimit} and \ref{thm:densitynonzero} in the obvious way, requiring that $p$ splits in
$$F = K(\zeta_N, a_1^{1/N}, \ldots , b_k^{1/N}).$$
We now only have to check that the local densities are nonzero.

Let $n$ be a positive squarefree integer. By the choice of $N$, there exists an isomorphism $\sigma$ of
$$K_n := K(\zeta_{q(n)}, p_1^{1/q(n)}, \ldots , p_t^{1/q(n)})$$
which fixes
$$K(\zeta_{N}, p_1^{1/N}, \ldots, \ldots , p_k^{1/N})$$
but does not fix $\zeta_{q(\ell)}$ for any $\ell$. Since $a_i, b_i > 0$, the field $K_n$ contains
$$FL_n = \mathbb{Q}(\zeta_{q(n)}, a_1^{1/q(n)}, b_1^{1/q(n)}, \ldots, b_k^{1/q(n)}),$$
and the positivity of the local density $d_n$ follows by restricting $\sigma$ to $FL_n$.

\section{Discussion} \label{sec:discussion}

The developed methods reduce a large class of problems concerning the reductions of multiplicative subgroups of $\mathbb{Q}^{\times}$ modulo primes to analyzing local obstructions in Kummer-type extensions. In our applications the obstructions were rather straightforward, the most difficult ones faced being those in Theorem \ref{thm:equal}. However, there are problems with much more complex local obstructions. We mention a couple of such problems.

Our first example is a natural generalization of Theorem \ref{thm:ordering}: determine all rationals $a_i$ such that we have $\ord_p(a_1) > \ldots > \ord_p(a_k)$ for infinitely many $p$.

For the sake of discussion, assume all $a_i$ are positive. The chain of inequalities $\ord_p(a_1) > \ldots > \ord_p(a_k)$ can be seen as a collection of chains of the form $\ord_p(b_i^{e_{i, 1}}) > \ord_p(b_i^{e_{i, 2}}) > \ldots > \ord_p(b_i^{e_{i, l_i}})$ interlacing with each other, where $b_i$ are pairwise multiplicatively independent and none of $b_i$ is a perfect power. For each $i$ we have to decide on the divisibility of the order of $b_i$ by various prime powers to satisfy the inequalities $\ord_p(b_i^{e_{i, j}}) > \ord_p(b_i^{e_{i, j+1}})$. In addition to this, in all cases it is not possible to guarantee that some $\ord_p(b_i)$ are divisible by a given integer and some are not, and we also have to make sure that we may combine the chains $\ord_p(b_i^{e_{i, 1}}) > \ldots > \ord_p(b_i^{e_{i, l_i}})$. 

Another related problem is considering the solvability of a system of equations of the form $a_i^{x_i} \equiv b_i \pmod{p}$, $1 \le i \le k$. The solvability is equivalent to the conditions $\ord_p(a_i) | \ord_p(b_i)$. Such equations are the topic of the two-variable Artin conjecture (see \cite{Moree-Steven}). Theorem \ref{thm:system} already proves that such a system is solvable for $a_i, b_i > 1$ infinitely often, but this is in general not the case. Note that by choosing the cyclic system $b_i = a_{i+1}$ we arrive at Theorem \ref{thm:equal}.

By similar work as in the proof of Theorem \ref{thm:equal}, the solvability of a system of exponential equations can be reduced to considering the $2$-adic valuations of orders. Again, determining the local obstructions is rather unpleasant, and the necessary and sufficient condition is not very enlightening.

We conclude by mentioning a result from \cite{Moree-Steven}: for multiplicatively independent $a$ and $b$, the density of primes $p$ such that the equation $a^x \equiv b \pmod{p}$ is solvable is a positive rational number times a universal constant. The product formula \ref{eq:prod} is not enough to give product formulas for densities of primes defined by conditions such as the solvability of $a^x \equiv b \pmod{p}$, $\ord_p(2) = \ord_p(3)$ or $\ord_p(2) > \ord_p(3)$. It would be of interest to obtain results similar to those in \cite{Moree-Steven} for a wider class of problems.

\end{document}